\newtheorem{theorem}{Theorem}[section]
\newtheorem{lemma}[theorem]{Lemma}
\newtheorem{proposition}[theorem]{Proposition}
\newtheorem{corollary}[theorem]{Corollary}
\newtheorem{example}[theorem]{Example}
\numberwithin{equation}{section}
\DeclareMathOperator{\rad}{rad}
\DeclareMathOperator{\Ob}{Ob}
\DeclareMathOperator{\pd}{pd}
\DeclareMathOperator{\id}{id}
\DeclareMathOperator{\Hom}{Hom}
\DeclareMathOperator{\End}{End}
\DeclareMathOperator{\Ext}{Ext}
\DeclareMathOperator{\supp}{supp}
\DeclareMathOperator{\Top}{Top}
\title[Stratifications of directed categories and APR tilting modules]{Stratifications of finite directed categories and generalized APR tilting modules}
\author{Liping Li}
\thanks{The author would like to thank the referee for carefully reading the preprint, and pointing out some existed results on this topic, which are unknown to the author.}
\address{Department of Mathematics, University of California, Riverside, CA, 92507.}
\email{lipingli@math.ucr.edu}
\begin{document}

\begin{abstract}
A finite directed category is a $k$-linear category with finitely many objects and an underlying poset structure, where $k$ is an algebraically closed field. This concept unifies structures such as $k$-linerizations of posets and finite EI categories, quotient algebras of finite-dimensional hereditary algebras, triangular matrix algebras, etc. In this paper we study representations of finite directed categories and discuss their stratification properties. In particular, we show the existence of generalized APR tilting modules for triangular matrix algebras under some assumptions.
\end{abstract}

\maketitle

\section{Introduction}

It is worth to point out that in representation theory many structures people are interested in have underlying posets. Specific examples include posets, directed quivers, quotient algebras of finite-dimensional hereditary algebras (in particular, piecewise hereditary algebras, see \cite{Happel,HRS}), Auslander algebras of representation-directed algebras, triangular matrix algebras (see \cite{BFT}), transporter categories (see \cite{Xu2}), orbit categories (\cite{Webb2}), fusion systems (\cite{Linckelmann}), and skeletal finite EI categories (i.e., finite categories such that every endomorphism is an isomorphism, see \cite{Dieck,Dietrich,Li1,Li2,Luck,Webb1,Webb2,Xu1,Xu2}). Therefore, it makes sense to define a concept unifying these structures, study their representations and homological properties, and generalize many existed but sporadic results.

This concept has been defined in \cite{Li3,Li4}, which we call \textit{finite directed categories}. By definition, a finite directed category $\mathcal{A}$ is a $k$-linear category with finitely many objects, where $k$ is an algebraically closed field, satisfying the following properties: $\mathcal{A}$ is \textit{locally finite}, i.e., for two objects $x, y \in \Ob \mathcal{A}$, $\mathcal{A} (x, y)$ is a finite-dimensional vector space; there is a partial order $\leqslant$ on $\Ob \mathcal{A}$ such that $\mathcal{A} (x, y) \neq 0$ implies $x \leqslant y$. Note that we can extend this partial order to a linear order with respect to which $\mathcal{A}$ is still directed. Indeed, let $O_1$ be the set of all minimal objects in $\Ob \mathcal{A}$; let $O_2$ be the set of all minimal objects in $\Ob \mathcal{A} \setminus O_1$, and so on. Define an arbitrary linear order $\leqslant_i$ for each set $O_i$. For two objects $x, y \in \Ob \mathcal{A}$, we then define $x < y$ if $x <_i y$ for some $i$, or $x \in O_i$, $y \in O_j$, and $i < j$. The order defined in this way is indeed linear, and $\mathcal{A}$ is directed with respect to it. Therefore, without loss of generality we assume that the partial order $\leqslant$ is linear. We also suppose that $\mathcal{A}$ is \textit{connected}. That is, for $x, y \in \Ob \mathcal{A}$, there is a sequence of objects $x = x_0, x_1, \ldots, x_n = y$ such that either $\mathcal{A} (x_i, x_{i+1}) \neq 0$ or $\mathcal{A} (x_{i+1}, x_i) \neq 0$, $0 \leqslant i \leqslant n-1$.

A \textit{representation} $R$ of $\mathcal{A}$ is a $k$-linear covariant functor from $\mathcal{A}$ to $k$-vec, the category of finite-dimensional vector spaces. Note that by Gabriel's construction (\cite{BG}), $\mathcal{A}$ (more precisely, the space of all morphisms in $\mathcal{A}$) can be viewed as a finite-dimensional algebra $A$, and the category of representations of $\mathcal{A}$ can be identified with $A$-mod, the category of finitely generated $A$-modules. \footnote{This result is true for all locally finite $k$-linear categories with finitely many objects, even if it is not directed.} We call $A$ the \textit{associated algebra} of $\mathcal{A}$, and call $\mathcal{A}$ the \textit{associated category} of $A$. By abuse of notation, we identify the category $\mathcal{A}$ with the algebra $A$, and call $R$ an $\mathcal{A}$-module.

It is clear from this definition that $k$-linearizations of finite posets, transporter categories, fusion systems, orbit categories, and skeletal finite EI categories are indeed directed categories. Furthermore, finite-dimensional hereditary algebras and their quotient algebras, and triangular matrix algebras can be viewed as directed categories in an obvious way. It is also clear from the definition that every directed category $\mathcal{A}$ is skeletal. However, the corresponding algebra $A$ might not be basic since for $x \in \Ob \mathcal{A}$, the endomorphism algebra $\mathcal{A} (x,x)$ might not be basic. In the case that $\mathcal{A} (x, x)$ is a local algebra, we call $x$ a \textit{primitive} object. If every object in $\mathcal{A}$ is primitive, then the associated algebra $A$ is basic.

In the next section we introduce some elementary results on representations of of directed categories, describe the indecomposable projective modules and simple modules, and study the induction and restriction functors with respect to full subcategories (which are also directed). Corresponding results for finite EI categories have been explored in \cite{Webb1,Xu1}.

Directed categories have nice stratification properties. Explicitly, every directed category is stratified with respect to a preorder $\preccurlyeq$ determined by the given linear order $\leqslant$ on $\Ob \mathcal{A}$, and standard modules with respect to $\preccurlyeq$ coincide with indecomposable summands of endomorphism algebras of objects. Directed categories standardly stratified with respect to $\preccurlyeq$ have been characterized in \cite{Li3}. In Section 3 we give more properties. In particular, we prove that the associated category of an arbitrary finite-dimensional algebra is a directed category with respect to a linear order if and only if the composition factors of every standard module with respect to this linear order are all isomorphic, if and only if all proper standard modules are simple. We also show that when every object in $\mathcal{A}$ is primitive, and $\mathcal{A}$ is standardly stratified with respect to $\leqslant$, then an $\mathcal{A}$-module $M$ has finite projective dimension if and only if it has a filtration by standard modules, if and only if its value $M(x)$ on each object $x \in \Ob \mathcal{A}$ is a free $\mathcal{A} (x, x)$-module. In other words, under the assumptions $\mathcal{F} (\Delta)$, the category of all finitely generated $\mathcal{A}$-modules with filtrations by standard modules, coincide with $\mathcal{P} ^f (\mathcal{A})$, the category of finitely generated $\mathcal{A}$-modules with finite projective dimension. The problem whether these two important subcategories of $\mathcal{A}$-mod coincide has been considered by Platzeck and Reiten in \cite{PR}.

Let $\Lambda$ be a finite-dimensional basic $k$-algebra and suppose that it has a simple projective module $S$. The APR tilting module is defined in \cite{APR} as $Q \oplus \tau ^{-1} S$, where $\tau$ is the Auslander-Reiten translation, and $Q$ is the direct sum of all indecomposable projective $A$-modules (up to isomorphism) except $S$. An observation tells us that under the given assumption $\Lambda \cong \begin{bmatrix} \Lambda_1 & 0 \\ M & k \end{bmatrix}$ is a triangular matrix algebra, and hence can be viewed as a directed category. It is natural to ask whether general APR tilting modules exist for arbitrary triangular matrix algebras $\begin{bmatrix} \Lambda_1 & 0 \\ M & \Lambda_2 \end{bmatrix}$ where $\Lambda_2$ is a local algebra. In the last section we show the existence of such APR tilting modules under suitable conditions.

We introduce the notation and convention here. Throughout this paper $\mathcal{A}$ is a connected directed category with respect to a fixed linear order $\leqslant$ on $\Ob \mathcal{A}$, and its associated algebra is denoted by $A$. Sometimes we consider an arbitrary algebra and denote it by $\Lambda$ to distinguish it from $A$. For every $x \in \Ob \mathcal{A}$, we let $1_x$ be the identity morphism, which is also an idempotent in $A$. The symbol $[n]$ is the set of all positive integers from 1 to $n$. All modules we consider in this paper are left finitely generated modules if we make no other claim. Composite of maps, morphisms and actions is from right to left. To simplify the expression of statements, we view the zero module as a projective or a free module.

\section{Preliminaries}

We first give some examples of directed categories. Let $\Lambda$ be a quotient algebra of a finite-dimensional hereditary algebra, and let $Q$ be the ordinary quiver. Then $\Lambda$ can be regarded as a finite directed category. Objects are just the vertices of $Q$, and morphisms from vertex $v$ to vertex $w$ are elements in $1_w \Lambda 1_v$.

By definition, a \textit{finite EI category} $\mathcal{E}$ is a small category with finitely many morphisms such that every endomorphism is an isomorphisms. Examples of finite EI categories includes finite posets, transporter categories \cite{Xu2}, orbit categories \cite{Webb2}, and fusion systems \cite{Linckelmann}. When $\mathcal{E}$ is skeletal, we can define a partial order $\preccurlyeq$ on $\Ob \mathcal{E}$ as follows: for $x, y \in \Ob \mathcal{E}$ such that $\mathcal{E} (x, y) \neq \emptyset$, we let $x \preccurlyeq y$. As we did in the introduction, we can extend this partial order to a linear order $\leqslant$, with respect to which the $k$-linearization of $\mathcal{E}$ is a directed category.

Let $A_1$ and $A_2$ be two finite-dimensional $k$-algebras and let $M$ be a $(A_2, A_1)$-bimodule. Then we can construct the triangular matrix algebra $A = \begin{bmatrix} A_1 & 0 \\ M & A_2 \end{bmatrix}$. The elements of $A$ are $2 \times 2$ matrices $\begin{bmatrix} a & 0 \\ v & b \end{bmatrix}$, where $a \in A_1, b \in A_2, v \in M$. Addition and multiplication are defined by the usual operations on matrices. For details, see \cite{ARS}. The associated category of $A$ is a directed category with the following structure:
\begin{equation*}
\xymatrix{ \mathcal{A}: & x \ar@(ld,lu)[]|{A_1} \ar @<1ex>[rr] ^M \ar@<-1ex>[rr] ^{\ldots} & & y \ar@(rd,ru)[]|{A_2}}.
\end{equation*}

Conversely, given a directed category $\mathcal{A}$, its associated algebra $A$ is a triangular matrix algebra. Indeed, let $x$ be a maximal object in $\mathcal{A}$ with respect to $\leqslant$, and let $\epsilon = \sum _{x \neq z \in \Ob \mathcal{A}} 1_z$. Define $A_1 = \epsilon A \epsilon$, $A_2 = 1_x A 1_x$, and $M = 1_x A \epsilon$. Note that $\epsilon A 1_x = \epsilon \mathcal{A} 1_x = 0$ since there is no nonzero morphisms from objects different from $x$ to $x$. Consequently, $A = \begin{bmatrix} A_1 & 0 \\ M & A_2 \end{bmatrix}$ is a triangular matrix algebra.

Let $\Lambda$ be a finite-dimensional algebra standardly stratified with respect to a linear order on isomorphism classes of simple modules, and let $\Gamma$ be the extension algebra of standard modules. In \cite{Li4} we show that the associated $k$-linear category of $\Gamma$ is a directed category with respect to this linear order. In \cite{Li5} we show that if $\Lambda$ is standardly stratified with respect to all linear orders on isomorphism classes of simple modules, then the associated category of $\Lambda$ is directed.

Let $\Lambda$ be a finite-dimensional algebra. A \textit{path} in $\Lambda$-mod is a sequence
\begin{equation*}
\xymatrix{M_0 \ar[r] ^{f_1} & M_1 \ar[r] ^{f_2} \ar[r] & \ldots \ar[r] ^{f_t} & M_t}
\end{equation*}
of nonzero nonisomorphisms $f_1, \ldots, f_t$, where all modules in this sequence are indecomposable. It is a \textit{cycle} if $M_0 \cong M_t$. A $\Lambda$-module is called a \textit{directed} module if it appears in no cycles. The algebra $\Lambda$ is called \textit{representation-directed} if every indecomposable $\Lambda$-module is directed. It is known that every representation-directed algebra has finite representation type. Conversely, if $\Lambda$ is a hereditary or tilted algebra of finite representation type, then it is representation-directed (see Lemma 1.1 and Corollary 3.4 in Chapter IX, \cite{ASS}.

The following proposition gives us a good relation between representation-directed algebras and directed categories. Recall that for an algebra $\Lambda$ of finite representation type, its \textit{Auslander algebra} is the endomorphism algebra of the direct sum of all indecomposable $\Lambda$-modules (up to isomorphism).

\begin{proposition}
Let $\Lambda$ be a finite-dimensional algebra of finite representation type and let $A$ be its Auslander algebra. Then the following are equivalent:
\begin{enumerate}
\item $\Lambda$ is a representation-directed algebra.
\item The associated category $\mathcal{A}$ of $A$ is a directed category.
\item $A$ is a quotient algebra of a finite-dimensional hereditary algebra.
\end{enumerate}
\end{proposition}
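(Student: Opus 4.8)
The plan is to compare all three conditions with the single combinatorial statement that the Auslander--Reiten quiver $\Gamma_\Lambda$ of $\Lambda$ is acyclic, i.e. has no oriented cycles (loops included), and then close the cycle of implications $(1)\Rightarrow(3)\Rightarrow(2)\Rightarrow(1)$. Two standard facts carry most of the weight. First, $A$ is basic and its Gabriel quiver is $\Gamma_\Lambda$, the number of arrows $X\to Y$ being $\dim_k\rad_\Lambda(X,Y)/\rad^2_\Lambda(X,Y)$ (see \cite{ARS,ASS}). Second, by the Harada--Sai lemma $\rad^N(\Lambda\text{-mod})=0$ for $N\gg0$, so every nonzero non-isomorphism $X\to Y$ between indecomposables lies in $\rad^m\setminus\rad^{m+1}$ for some $m\geqslant1$, hence is a nonzero sum of composites of $m$ irreducible maps through indecomposables; at least one summand is nonzero, giving an oriented path $X\rightsquigarrow Y$ of positive length in $\Gamma_\Lambda$. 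We assume $\Lambda$ connected, so $\Gamma_\Lambda$ and hence $\mathcal{A}$ is connected, in accordance with the standing convention on directed categories.

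For $(1)\Rightarrow(3)$: if $\Lambda$ is representation-directed then every indecomposable $X$ is a brick, for a nonzero element of $\rad\End_\Lambda(X)$ would be a cycle of length one; hence $\Gamma_\Lambda$ has no loops. It also has no oriented cycle of length $\geqslant2$: a shortest such cycle visits pairwise non-isomorphic indecomposables, and its arrows are nonzero non-isomorphisms, so it is already a cycle in $\Lambda$-mod, contradicting $(1)$. Thus $\Gamma_\Lambda$ is acyclic, and $A\cong k\Gamma_\Lambda/I$ exhibits $A$ as a quotient of the finite-dimensional hereditary algebra $k\Gamma_\Lambda$, which is $(3)$. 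For $(3)\Rightarrow(2)$, a quotient of a finite-dimensional hereditary algebra is a directed category, as recorded at the beginning of this section (order the vertices of the quiver compatibly with the arrows).

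For $(2)\Rightarrow(1)$: suppose $\mathcal{A}$ is directed with linear order $\leqslant$ on $\Ob\mathcal A$, the set of indecomposable $\Lambda$-modules, and recall $\mathcal{A}(X,Y)=\Hom_\Lambda(X,Y)$. First, $\Gamma_\Lambda$ has no oriented cycle of length $\geqslant2$: the arrows of a shortest such cycle, through pairwise distinct objects, would give $X_0\leqslant X_1\leqslant\cdots\leqslant X_0$ and collapse them to a single object. Second, $\Gamma_\Lambda$ has no loop: a loop at $X$ provides a nonzero $\phi\in\rad\End_\Lambda(X)$, and choosing an indecomposable summand $Y$ of the image $\phi(X)\subseteq X$ gives $\Hom_\Lambda(X,Y)\neq0$ and $\Hom_\Lambda(Y,X)\neq0$, so $X\leqslant Y\leqslant X$ and $Y\cong X$; but then the submodule $\phi(X)$ of $X$ has $X$ as a direct summand, forcing $\phi(X)=X$, so $\phi$ is surjective hence an isomorphism, contradicting $\phi\in\rad$. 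Hence $\Gamma_\Lambda$ is acyclic. Finally, if $\Lambda$ were not representation-directed, a cycle $M_0\to\cdots\to M_t\cong M_0$ of nonzero non-isomorphisms between indecomposables would, by the Harada--Sai observation of the first paragraph, yield an oriented cycle in $\Gamma_\Lambda$ through $M_0$, a contradiction; so $\Lambda$ is representation-directed.

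The main obstacle is ruling out loops in $(2)\Rightarrow(1)$. A directed category may perfectly well have loops, since the endomorphism ring of an object is permitted to be an arbitrary local algebra, so directedness of $\mathcal{A}$ does not on its own prevent an indecomposable $\Lambda$-module from failing to be a brick; one genuinely needs the argument with the image of a radical endomorphism --- which is special to the Auslander-algebra situation, where morphisms in $\mathcal{A}$ are honest homomorphisms of $\Lambda$-modules --- to turn a non-brick indecomposable into a pair of indecomposables admitting homomorphisms in both directions, thereby violating antisymmetry of $\leqslant$. Everything else is bookkeeping around the two cited facts.
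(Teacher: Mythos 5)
Your proof is correct, and while the overall strategy (translating cycles in $\Lambda$-mod into failure of directedness of $\mathcal{A}$, and getting (3) from acyclicity of the quiver of $A$) is the same as the paper's, the organization and the level of detail differ in an instructive way. The paper proves $(1)\Leftrightarrow(2)$ in one stroke by declaring it ``straightforward'' that $\mathcal{A}$ is directed iff no indecomposable lies on a cycle, and then gets $(1)\Rightarrow(3)$ by citing the fact that directed modules are bricks (ASS IX.1.4), so that all $\End_\Lambda(M_i)\cong k$. You instead run the cycle $(1)\Rightarrow(3)\Rightarrow(2)\Rightarrow(1)$ through acyclicity of $\Gamma_\Lambda$, using Harada--Sai to convert arbitrary nonzero non-isomorphisms into paths of irreducible maps. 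The genuinely valuable difference is in $(2)\Rightarrow(1)$: since a cycle in the sense of the definition may consist of a single nonzero radical endomorphism of one indecomposable, and since a directed category is perfectly allowed to have objects with nontrivial local endomorphism algebras, the paper's asserted equivalence between ``no Hom-cycle of pairwise non-isomorphic indecomposables'' and ``no cycle'' is exactly the point that needs an argument; your observation that the image of a radical endomorphism $\phi$ of $X$ yields an indecomposable $Y$ with $\Hom_\Lambda(X,Y)\neq 0\neq\Hom_\Lambda(Y,X)$, forcing $Y\cong X$ and hence $\phi$ surjective, supplies precisely the missing brick property. (For $(2)\Rightarrow(1)$ you could also bypass Harada--Sai: once all indecomposables are bricks, a cycle $M_0\to\cdots\to M_t\cong M_0$ forces $M_0\leqslant\cdots\leqslant M_t=M_0$, so all terms coincide and the first map is already a nonzero radical endomorphism.) In short, your write-up is a more self-contained and more careful version of the paper's argument; the paper's is shorter because it outsources the brick property to the literature and leaves the self-cycle case implicit.
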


\begin{proof}
Let $M = \bigoplus _{i \in [n]} M_i$ be the direct sum of all indecomposable $\Lambda$-modules (up to isomorphism). Note that the associated category of $A = \End_{\Lambda} (M)$ has the following structure. Its objects are are indexed by $M_i$. By abuse of notation, we still denote these objects by $M_i$, $i \in[n]$. For two objects $M_i$ and $M_j$, $\mathcal{A} (M_i, M_j) = \Hom_{\Lambda} (M_i, M_j)$, which is an $(\End _{\Lambda} (M_j), \End_{\Lambda} (M_i))$-bimodule. Now it is straightforward to see that $\mathcal{A}$ is a directed category if and only if there is no sequences of nonisomorphic indecomposable $\Lambda$-modules $M_1, \ldots, M_t$ such that $\Hom _{\Lambda} (M_1, M_2) \neq 0$, $\ldots$, $\Hom _{\Lambda} (M_t, M_1) \neq 0$, i.e., every indecomposable $\Lambda$-module is not in a cycle. Therefore, (1) is equivalent to (2).

Clearly (3) implies (2). We finish the proof by showing (1) implies (3). We already know that $\mathcal{A}$ is a directed category. By Proposition 1.4 in Chapter IX \cite{ASS}, the endomorphism algebra of every directed module is one-dimensional, so $\End _{\Lambda} (M_i) \cong k$ for all $i \in [n]$. Therefore, $A$ is indeed a quotient algebra of a finite-dimensional hereditary algebra.
\end{proof}

Already given enough examples, we turn to study representations of $\mathcal{A}$. Recall a \textit{representation} $R$ of $\mathcal{A}$ is a $k$-linear covariant functor from $\mathcal{A}$ to $k$-vec. For $x \in \Ob \mathcal{A}$, the \textit{value} of $R$ on $x$ is defined as $R(x)$. The \textit{support} of $R$ is defined to be the set of objects $x$ such that $R(x) \neq 0$, and is denoted by $\supp (R)$. We say $R$ is \textit{generated} by its value on $x_1, \ldots, x_n$ if $R = \mathcal{A} \sum _{i \in [n]} R(x_i)$ and call $\{ x_i \} _{i \in [n]}$ a \textit{generating set} of $R$. If the set $\{ x_i \} _{i \in [n]}$ is contained in every generating set of $R$, it is called a \textit{minimal generating set} of $R$.

Note that the identity morphisms $1_x$, when $x$ ranges over all objects in $\mathcal{A}$, form a set of orthogonal idempotents in $A$, although they might not be primitive. Therefore, we have an $\mathcal{A}$-module decomposition $_{\mathcal{A}} \mathcal{A} \cong \bigoplus _{x \in \Ob \mathcal{A}} \mathcal{A} 1_x$, where $\mathcal{A} 1_x$ is the space of all morphisms starting from $x$. Let $E_x$ be a chosen set of orthogonal primitive idempotents in $\mathcal{A} (x, x)$ such that $\sum _{e \in E_x} e = 1_x$. Then $E = \sqcup _{x \in \Ob \mathcal{A}} E_x$ is a set of primitive orthogonal idempotents of $A$ with $\sum _{e \in E} e= 1$. Furthermore, the space constituted of all non-endomorphisms in $\mathcal{A}$ is a two-sided ideal of $A$. Therefore, the space constituted of all endomorphisms in $\mathcal{A}$ is a quotient algebra of $A$, and can be viewed as an $\mathcal{A}$-module. Also observe that for every $x \in \Ob \mathcal{A}$, a simple $\mathcal{A} (x, x)$-module can be lifted to a simple $\mathcal{A}$-module supported on $x$. These observations give us a description of indecomposable projective $\mathcal{A}$-modules and simple $\mathcal{A}$-modules.

\begin{proposition}
Let $\mathcal{A}$ be a connected finite directed category and $A$ be the associated algebra. Let $R$ be a representation of $\mathcal{A}$. Then:
\begin{enumerate}
\item Every indecomposable projective $\mathcal{A}$-module is isomorphic to $\mathcal{A} e$ with $e \in E_x$ for some $x \in \Ob \mathcal{A}$.
\item Every simple $\mathcal{A}$-module can be identified with a simple $\mathcal{A} (x, x)$-module for some $x \in \Ob \mathcal{A}$.
\item For every $x \in \Ob \mathcal{A}$, the value $R(x) = 1_x R \cong \Hom _{\mathcal{A}} (\mathcal{A}1_x, R)$.
\item The minimal generating set of $R$ exists, and is unique.
\item If $R$ is an indecomposable projective $\mathcal{A}$-module generated by $R(x)$, then $R(x)$ is a projective $\mathcal{A} (x, x)$-module.
\end{enumerate}
\end{proposition}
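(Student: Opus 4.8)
The plan is to isolate first the one structural fact that drives statements (1), (2), (4) and (5): the subspace $I \subseteq A$ spanned by all non-endomorphisms of $\mathcal{A}$ is a \emph{nilpotent} two-sided ideal. That $I$ is a two-sided ideal was already observed before the proposition; for nilpotency, note that any nonzero product $f_n \circ \cdots \circ f_1$ of non-endomorphisms produces a chain $x_1 < x_2 < \cdots < x_{n+1}$ of distinct objects, since in a directed category a non-endomorphism $x \to y$ forces $x < y$. As $\Ob \mathcal{A}$ is finite, $I^n = 0$ once $n \geqslant |\Ob \mathcal{A}|$, hence $I \subseteq \rad A$. Consequently $A/I \cong \prod_{x \in \Ob \mathcal{A}} \mathcal{A}(x,x)$, and therefore $A/\rad A \cong \prod_{x \in \Ob \mathcal{A}} \mathcal{A}(x,x)/\rad \mathcal{A}(x,x)$, with the images of the $1_x$ as the corresponding orthogonal block idempotents (the off-diagonal Peirce pieces $1_y A 1_x$ with $x \neq y$ lie in $I$).

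Statement (3) is the standard Yoneda identity: evaluation $f \mapsto f(1_x)$ gives an isomorphism $\Hom_{\mathcal{A}}(\mathcal{A}1_x, R) \xrightarrow{\ \sim\ } 1_x R$ with inverse $v \mapsto (a \mapsto av)$, and $1_x R$ is by definition the value $R(x)$ under Gabriel's identification of $\mathcal{A}$-modules with representations. Statement (1) follows from the decomposition $_{\mathcal{A}}\mathcal{A} = \bigoplus_{x \in \Ob \mathcal{A}}\bigoplus_{e \in E_x} \mathcal{A}e$ into indecomposable projectives (each $e$ being primitive) together with the Krull--Schmidt property of $A$-mod: every indecomposable projective is a summand of $_{\mathcal{A}}\mathcal{A}$, hence isomorphic to some $\mathcal{A}e$. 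For (2), a simple $\mathcal{A}$-module is a simple $A/\rad A$-module, so by the first paragraph it is a simple module over exactly one factor $\mathcal{A}(x,x)/\rad \mathcal{A}(x,x)$, i.e.\ a simple $\mathcal{A}(x,x)$-module; conversely such a module lifts to a simple $\mathcal{A}$-module supported on $x$, as recalled before the proposition.

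For (4) I would show that $G(R) := \supp(\Top R)$, where $\Top R = R/(\rad A)R$, is simultaneously a generating set of $R$ and a subset of every generating set; this gives existence and, trivially, uniqueness of a minimal one. Since $R$ is finitely generated, Nakayama's lemma says $\{x_i\}$ generates $R$ iff the images of the $1_{x_i}R$ generate $\Top R$. By the first paragraph $\Top R = \bigoplus_{y} 1_y(\Top R)$, where the factor $\mathcal{A}(y,y)/\rad \mathcal{A}(y,y)$ acts on $1_y(\Top R)$ and the other factors act as $0$; together with the surjection $1_{x_i} R \twoheadrightarrow 1_{x_i}(\Top R)$ this shows the images of the $1_{x_i}R$ generate exactly $\bigoplus_{y \in \{x_i\}} 1_y(\Top R)$. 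Hence $\{x_i\}$ generates $R$ if and only if $G(R) \subseteq \{x_i\}$.

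Finally (5): if $R$ is indecomposable projective and generated by $R(x) \neq 0$, then by (4) $G(R)$ is a nonempty subset of $\{x\}$, so $G(R) = \{x\}$; by (1), $R \cong \mathcal{A}e$ for some $e \in E_y$, and since $\Top(\mathcal{A}e)$ is simple and supported on $y$ we must have $y = x$, so $e \in E_x$. As $e = 1_x e$, we get $\epsilon e = 0$ for $\epsilon = 1 - 1_x = \sum_{z \neq x} 1_z$, whence $R(x) = 1_x \mathcal{A}e = (1_x \mathcal{A}1_x)e = \mathcal{A}(x,x)e$, which is a (indecomposable) projective $\mathcal{A}(x,x)$-module because $e$ is a primitive idempotent of $\mathcal{A}(x,x)$. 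I expect the only step requiring genuine care to be (4), and specifically the verification that every generating set must contain $\supp(\Top R)$: this is exactly where the nilpotency of the non-endomorphism ideal is used, through the block-diagonal description of $A/\rad A$; the remaining arguments are short and formal.
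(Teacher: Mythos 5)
Your proof is correct and follows essentially the same route as the paper's: identifying the minimal generating set with $\supp(\Top R)$, reading off simples and indecomposable projectives from the decomposition of $_{\mathcal{A}}\mathcal{A}$, and computing $R(x) = 1_x\mathcal{A}e = \mathcal{A}(x,x)e$ for (5). The paper states these steps tersely ("straightforward", "it is clear"), whereas you supply the underlying justification — the nilpotency of the ideal of non-endomorphisms and the resulting block description of $A/\rad A$ — but this is elaboration, not a different argument.
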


\begin{proof}
The first two statements are straightforward. The third statement follows from the equivalence between the category of representations of $\mathcal{A}$ and the category $A$-mod. It is clear that the minimal generating set of $R$ coincide with the support of the $\Top (R)$, where $\Top(R) = R / \rad R$, which clearly exists and is unique since the generating set of every simple module is a set containing a single object. If $R$ is indecomposable and projective, then $R \cong \mathcal{A} e$, where $e$ is a primitive idempotent in $\mathcal{A} (x, x)$. Therefore, $R(x) \cong 1_x \mathcal{A} e$ is a summand of $\mathcal{A} (x, x) = 1_x \mathcal{A} 1_x$ up to isomorphism.
\end{proof}

In the rest of this section we consider the behaviors of induction and restriction functors. Let $\mathcal{B}$ be a subcategory of $\mathcal{A}$, and let $V$ and $W$ be an $\mathcal{A}$-module and a $\mathcal{B}$-module respectively. The induction functor is $\uparrow _{\mathcal{B}} ^{\mathcal{A}} = \mathcal{A} \otimes _{\mathcal{B}} -$, sending $W$ to $\mathcal{A} \otimes _{\mathcal{B}} W$. Since the associated algebra $B$ of $\mathcal{B}$ is a subalgebra of $A$, this functor is well defined. On the other hand, the restriction functor $\downarrow _{\mathcal{B}} ^{\mathcal{A}}$ sends $V$ to $1_{\mathcal{B}} \cdot V$, which is a $\mathcal{B}$-module.

Suppose that $\mathcal{B}$ is a full subcategory of $\mathcal{A}$. We say $\mathcal{B}$ is an \textit{ideal} of $\mathcal{A}$ if whenever $x \in \Ob \mathcal{B}$, then every $y \in \Ob \mathcal{A}$ with $y \leqslant x$ is also contained in $\Ob \mathcal{B}$. Dually, we define \textit{co-ideals} of $\mathcal{A}$. It is not hard to see that if $\mathcal{B}$ is an ideal of $\mathcal{A}$, then the associated algebra $B$ is a right ideal of $A$. Dually, if $\mathcal{B}$ is a co-ideal of $\mathcal{A}$, then the associated algebra $B$ is a left ideal of $A$.

\begin{proposition}
Suppose that $\mathcal{B}$ is a (connected) full subcategory of $\mathcal{A}$. Let $V$ and $V'$ be $\mathcal{A}$-modules, and $W$ be a $\mathcal{B}$-module. We have:
\begin{enumerate}
\item $W \uparrow _{\mathcal{B}} ^{\mathcal{A}} \downarrow _{\mathcal{B}} ^{\mathcal{A}} \cong W$.
\item If $W$ is indecomposable, then $W \uparrow _{\mathcal{B}} ^{\mathcal{A}}$ is indecomposable.
\item If $\mathcal{B}$ is an ideal of $\mathcal{A}$, then $\downarrow _{\mathcal{B}} ^{\mathcal{A}}$ preserves left projective modules.
\item If $\mathcal{B}$ is a co-ideal of $\mathcal{A}$, then $\downarrow _{\mathcal{B}} ^{\mathcal{A}}$ preserves right projective modules.
\item If $\supp (V)$ is contained in $\Ob \mathcal{B}$, and $\mathcal{B}$ is a co-ideal of $\mathcal{A}$, then we have $\Ext _{\mathcal{A}} ^i (V, V') \cong \Ext _{\mathcal{B}} ^i (V \downarrow _{\mathcal{B}} ^{\mathcal{A}}, V' \downarrow _{\mathcal{B}} ^{\mathcal{A}})$ for $i \geqslant 0$.
\end{enumerate}
\end{proposition}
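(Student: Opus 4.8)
The plan is to reduce all five parts to two elementary facts about a full subcategory $\mathcal{B} \subseteq \mathcal{A}$: fullness gives $1_{\mathcal{B}} A 1_{\mathcal{B}} = B$, and directedness forces one of the two ``off--diagonal'' pieces to vanish --- $1_{\mathcal{B}} A \epsilon = 0$ when $\mathcal{B}$ is an ideal (no nonzero morphism enters $\mathcal{B}$ from outside) and $\epsilon A 1_{\mathcal{B}} = 0$ when $\mathcal{B}$ is a co-ideal (no nonzero morphism leaves $\mathcal{B}$), where I write $\epsilon = 1 - 1_{\mathcal{B}}$ and abbreviate $\uparrow \, = \, \uparrow_{\mathcal{B}}^{\mathcal{A}}$, $\downarrow \, = \, \downarrow_{\mathcal{B}}^{\mathcal{A}}$. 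I will also use throughout that $\downarrow \, = 1_{\mathcal{B}} \cdot - \, = \Hom_A(A1_{\mathcal{B}}, -)$ is exact and additive.

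For (1), since $W = 1_{\mathcal{B}} W$ one has $a \otimes w = a1_{\mathcal{B}} \otimes w$ in $A \otimes_B W$, so $W\uparrow \, = A1_{\mathcal{B}} \otimes_B W$; decomposing the right $B$-module $A 1_{\mathcal{B}} = B \oplus \epsilon A 1_{\mathcal{B}}$ and applying $1_{\mathcal{B}} \cdot -$ annihilates the second summand and leaves $W\uparrow\downarrow \, = B \otimes_B W \cong W$. For (2), I would first note that $W\uparrow$ is generated by $1_{\mathcal{B}}(W\uparrow) = W\uparrow\downarrow$, and that consequently every direct summand $Y$ of $W\uparrow$ satisfies $Y = A(1_{\mathcal{B}} Y)$; then if $W\uparrow \, = Y \oplus Z$, part (1) gives $W \cong Y\downarrow \oplus Z\downarrow$, so indecomposability of $W$ forces (say) $1_{\mathcal{B}} Y = 0$, whence $Y = A \cdot 0 = 0$.

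For (3) and (4) it suffices, by exactness and additivity of $\downarrow$, to restrict an indecomposable projective $Ae$ with $e \in E_x$. If $x \notin \Ob\mathcal{B}$, directedness and the ideal (resp.\ co-ideal) hypothesis give no nonzero morphism from $x$ into $\mathcal{B}$ (resp.\ from $\mathcal{B}$ to $x$), so $Ae\downarrow \, = 1_{\mathcal{B}} Ae = 0$; if $x \in \Ob\mathcal{B}$, the same hypothesis gives $1_{\mathcal{B}} A = B$ (resp.\ $A1_{\mathcal{B}} = B$), so $Ae\downarrow \, = 1_{\mathcal{B}} Ae = Be$, a summand of $B$ and hence projective. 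Statement (4) is the left--right mirror of (3) and also follows by passing to $\mathcal{A}^{\mathrm{op}}$.

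For (5) --- the heart of the matter --- I would take a minimal projective resolution $P_\bullet \to V$ over $A$. Since the minimal generating set of $V$ lies in $\supp(V) \subseteq \Ob\mathcal{B}$ and $\supp(Ae) \subseteq \{ y : y \geqslant x \}$ for $e \in E_x$, the co-ideal (i.e.\ up-closed) hypothesis gives $\supp(P_0) \subseteq \Ob\mathcal{B}$, hence $\supp(\ker(P_0 \to V)) \subseteq \Ob\mathcal{B}$, and inductively every $P_i$ is a sum of projectives $Ae$ with $e \in E_x$, $x \in \Ob\mathcal{B}$; in particular $\epsilon P_i = 0$. As in (3)/(4), $P_i\downarrow \, = 1_{\mathcal{B}} P_i = P_i$ is a projective $B$-module, so by exactness of $\downarrow$ the complex $P_\bullet\downarrow \, \to V\downarrow$ is a projective $B$-resolution of $V\downarrow \, = V$. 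It then remains to check that for any $A$-module $V'$ and any $A$-module $P$ with $\epsilon P = 0$ the assignment $f \mapsto f$ is an isomorphism $\Hom_A(P, V') \cong \Hom_B(P\downarrow, V'\downarrow)$: the image of an $A$-map automatically lies in $1_{\mathcal{B}} V'$, and conversely a $B$-linear $g\colon P \to 1_{\mathcal{B}} V'$ is already $A$-linear because $ap = (a1_{\mathcal{B}})p$ and $ag(p) = (a1_{\mathcal{B}})g(p)$ with $a1_{\mathcal{B}} \in A1_{\mathcal{B}} = B$ --- this last equality being exactly where the co-ideal condition $\epsilon A 1_{\mathcal{B}} = 0$ enters. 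This isomorphism is natural, hence a chain isomorphism $\Hom_A(P_\bullet, V') \cong \Hom_B(P_\bullet\downarrow, V'\downarrow)$, and taking cohomology proves (5). Steps (1)--(4) are little more than bookkeeping with the idempotents $1_{\mathcal{B}}$ and $\epsilon$; the one place needing genuine care is this final $\Hom$-identification, where one must remember that $B \subseteq A$ is a \emph{non-unital} subalgebra (its identity is $1_{\mathcal{B}} \neq 1$), so that ``restriction'' is multiplication by $1_{\mathcal{B}}$ rather than ordinary restriction of scalars, and it is precisely the co-ideal hypothesis that makes this behave well.
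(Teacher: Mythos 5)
Your proof is correct. Parts (1)--(4) follow the paper's argument almost verbatim: the same idempotent bookkeeping $1_{\mathcal{B}}A1_{\mathcal{B}}=B$ for fullness, the same ``a summand with zero restriction is zero because $W\uparrow_{\mathcal{B}}^{\mathcal{A}}$ is generated by its values on $\Ob\mathcal{B}$'' argument for (2), and the same computation $1_{\mathcal{B}}Ae=Be$ for (3)/(4); if anything, your explicit decomposition $A1_{\mathcal{B}}=B\oplus\epsilon A1_{\mathcal{B}}$ in (1) is slightly more careful than the paper's chain of equalities. Part (5) is where you genuinely diverge. The paper goes ``upward'': it cites the Eckmann--Shapiro lemma to get $\Ext_{\mathcal{B}}^i(V\downarrow,V'\downarrow)\cong\Ext_{\mathcal{A}}^i(V\downarrow\uparrow,V')$ and then uses $V\downarrow\uparrow\cong V$. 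You go ``downward'': you restrict a minimal $A$-projective resolution of $V$, observe that the co-ideal hypothesis keeps every syzygy and every $P_i$ supported in $\Ob\mathcal{B}$ so that $P_\bullet\downarrow$ is a $B$-projective resolution of $V$, and then verify the $\Hom$-identification by hand. Both are valid, and in fact your version makes explicit something the paper leaves implicit: Eckmann--Shapiro in all degrees $i$ requires the induction functor to be exact, which holds here exactly because the co-ideal condition forces $A1_{\mathcal{B}}=B$ (so $A1_{\mathcal{B}}$ is projective as a right $B$-module). Your direct check of $\Hom_A(P,V')\cong\Hom_B(P\downarrow,V'\downarrow)$ localizes the use of that same condition to a single, transparent computation, at the cost of a somewhat longer write-up. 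No gaps.
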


\begin{proof}
These results have been described in \cite{Xu1} in the context of finite EI categories, and the proofs are essentially the same. For details, please refer to that paper.

(1): By definition, we have
\begin{align*}
W \uparrow _{\mathcal{B}} ^{\mathcal{A}} \downarrow _{\mathcal{B}} ^{\mathcal{A}} & = 1_{\mathcal{B}} \cdot (\mathcal{A} \otimes _{\mathcal{B}} W) = 1_{\mathcal{B}} \mathcal{A} \otimes _{\mathcal{B}} 1_{\mathcal{B}} W\\
& = 1_{\mathcal{B}} \mathcal{A} 1_{\mathcal{B}} \otimes _{\mathcal{B}} W = \mathcal{B} \otimes _{\mathcal{B}} W \cong W
\end{align*}
since $\mathcal{B}$ is a full subcategory and $1_{\mathcal{B}} \mathcal{A} 1_{\mathcal{B}}$ can be identified with $\mathcal{B}$.

(2): Suppose $W \uparrow _{\mathcal{B}} ^{\mathcal{A}}$ is decomposable. Then we can write $W \uparrow _{\mathcal{B}} ^{\mathcal{A}} = M_1 \oplus M_2$, where both $M_1$ and $M_2$ are nonzero. But then
\begin{equation*}
W \cong W \uparrow _{\mathcal{B}} ^{\mathcal{A}} \downarrow _{\mathcal{B}} ^{\mathcal{A}} = M_1 \downarrow _{\mathcal{B}} ^{\mathcal{A}} \oplus M_2 \downarrow _{\mathcal{B}} ^{\mathcal{A}},
\end{equation*}
so either $M_1 \downarrow _{\mathcal{B}} ^{\mathcal{A}} = 0$ or $M_2 \downarrow _{\mathcal{B}} ^{\mathcal{A}} = 0$. Without loss of generality we assume $M_2 \downarrow _{\mathcal{B}} ^{\mathcal{A}} = 0$.

Let $G$ be the minimal generating set of $W$, so $G \subseteq \Ob \mathcal{B}$ and $W = \mathcal{B} \cdot \sum _{x \in G} W(x)$. Since
\begin{equation*}
W \uparrow _{\mathcal{B}} ^{\mathcal{A}} = \mathcal{A} \otimes _{\mathcal{B}} W = \mathcal{A} \otimes _{\mathcal{B}} (\mathcal{B} \cdot \sum _{x \in G} W(x)) = \mathcal{A} \cdot \sum _{x \in G} (1_{\mathcal{B}} \otimes _{\mathcal{B}} W(x)),
\end{equation*}
$W \uparrow _{\mathcal{B}} ^{\mathcal{A}}$ and hence $M_2$  are generated by its values on elements in $G$. But $M_2 \downarrow _{\mathcal{B}} ^{\mathcal{A}} = 0$ implies that the values of $M_2$ on all objects in $G \subseteq \Ob \mathcal{B}$ are all 0. Therefore, $M_2 = 0$. This contradiction tells us that $W \uparrow _{\mathcal{B}} ^{\mathcal{A}}$ is indecomposable.

(3): Let $P \cong \mathcal{A} e$ be a projective $\mathcal{A}$-module. Without loss of generality we can assume that $P$ is indecomposable, so $P \cong \mathcal{A}e$, where by the previous proposition $e$ is a primitive idempotent in $\mathcal{A} (x, x)$ for some $x \in \Ob \mathcal{A}$.

By definition, $P \downarrow _{\mathcal{B}} ^{\mathcal{A}} \cong 1_{\mathcal{B}} \mathcal{A} e$. Note that $1_{\mathcal{B}} \mathcal{A}$ constitutes of all morphisms in $\mathcal{A}$ ending at some object $y \in \Ob \mathcal{B}$. Since $\mathcal{B}$ is an ideal, by definition, there is no nonzero morphism in $\mathcal{A}$ staring from an object in $\Ob \mathcal{A} \setminus \Ob \mathcal{B}$ and ending at an object in $\Ob \mathcal{B}$. Therefore, $1_{\mathcal{B}} \mathcal{A} = 1_{\mathcal{B}} \mathcal{A} 1_{\mathcal{B}} = \mathcal{B}$, so $P \downarrow _{\mathcal{B}} ^{\mathcal{A}} \cong 1_{\mathcal{B}} \mathcal{A} e = \mathcal{B} e$. If $y \notin \Ob \mathcal{B}$, the last term in the above identity is 0. Otherwise, it is a nonzero projective $\mathcal{B}$-module.

(4): This is a dual statement of (3).

(5): First, since $\mathcal{B}$ is a co-ideal, every $\mathcal{B}$-module can be viewed as an $\mathcal{A}$-module, whose values on objects not contained in $\Ob \mathcal{B}$ are all zero. Conversely, given an $\mathcal{A}$-module whose values on objects not in $\Ob \mathcal{B}$ are all zero, it can be regarded as a $\mathcal{B}$-module.

By Eckmann-Shapiro Lemma, $\Ext _{\mathcal{B}} ^i (V \downarrow _{\mathcal{B}} ^{\mathcal{A}}, V' \downarrow _{\mathcal{B}} ^{\mathcal{A}}) \cong \Ext _{\mathcal{A}} ^i (V \downarrow  _{\mathcal{B}} ^{\mathcal{A}} \uparrow  _{\mathcal{B}} ^{\mathcal{A}}, V')$ for $i \geqslant 0$. By the above observation,
$V \downarrow  _{\mathcal{B}} ^{\mathcal{A}} \uparrow  _{\mathcal{B}} ^{\mathcal{A}} \cong V$, and the conclusion follows.
\end{proof}

An immediate result of this proposition is:

\begin{corollary}
If $\mathcal{A}$ is of finite representation type, so is every full subcategory.
\end{corollary}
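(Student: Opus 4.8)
The plan is to deduce this directly from parts (1) and (2) of the preceding proposition, which together say that the induction functor $\uparrow_{\mathcal{B}}^{\mathcal{A}}$ is well behaved with respect to indecomposables: it sends indecomposable $\mathcal{B}$-modules to indecomposable $\mathcal{A}$-modules, and it is ``injective on isomorphism classes'' because restriction recovers the original module. So the argument will be a short counting/contradiction argument rather than anything requiring real work.

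First I would reduce to the case that the full subcategory $\mathcal{B}$ is connected, since the proposition is phrased for connected full subcategories. If $\mathcal{B}$ is not connected, decompose it into its connected components $\mathcal{B}_1, \ldots, \mathcal{B}_m$; each $\mathcal{B}_j$ is again a full subcategory of $\mathcal{A}$ (and connected), the associated algebra $B$ is the product of the $B_j$, and every indecomposable $\mathcal{B}$-module is an indecomposable module over exactly one $\mathcal{B}_j$ (extended by zero on the other components). Hence $\mathcal{B}$ is of finite representation type if and only if every $\mathcal{B}_j$ is, so it suffices to treat connected $\mathcal{B}$.

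Now assume $\mathcal{B}$ is connected and, for contradiction, that $\mathcal{B}$ admits infinitely many pairwise non-isomorphic indecomposable modules $W_1, W_2, \ldots$. By part (2) of the proposition each $W_i \uparrow_{\mathcal{B}}^{\mathcal{A}}$ is an indecomposable $\mathcal{A}$-module. Moreover, if $W_i \uparrow_{\mathcal{B}}^{\mathcal{A}} \cong W_j \uparrow_{\mathcal{B}}^{\mathcal{A}}$, then applying $\downarrow_{\mathcal{B}}^{\mathcal{A}}$ and invoking part (1) gives $W_i \cong W_i \uparrow_{\mathcal{B}}^{\mathcal{A}} \downarrow_{\mathcal{B}}^{\mathcal{A}} \cong W_j \uparrow_{\mathcal{B}}^{\mathcal{A}} \downarrow_{\mathcal{B}}^{\mathcal{A}} \cong W_j$, so $i = j$. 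Thus $\{ W_i \uparrow_{\mathcal{B}}^{\mathcal{A}} \}_{i \geqslant 1}$ is an infinite family of pairwise non-isomorphic indecomposable $\mathcal{A}$-modules, contradicting the assumption that $\mathcal{A}$ is of finite representation type.

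The only point needing any care is the reduction to connected full subcategories, and even that is routine once one notes that a module category over a finite product of algebras splits as the corresponding product of module categories; the substance of the corollary is entirely contained in the induction/restriction formalism established above, so I do not anticipate a genuine obstacle.
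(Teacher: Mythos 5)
Your argument is correct and is essentially the paper's own proof: induce the infinitely many indecomposables up via part (2) of the proposition and distinguish them via part (1). The extra reduction to connected full subcategories is a reasonable precaution (the paper glosses over it), but it does not change the substance of the argument.
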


\begin{proof}
Let $\mathcal{B}$ be a full subcategory of $\mathcal{A}$. If $\mathcal{B}$ has infinitely many non-isomorphic indecomposable representations, applying the induction functor, we get infinitely many non-isomorphic indecomposable representations by (2) of the above proposition. These induced indecomposable representations are non-isomorphic by (1) of the above proposition since restricted to $\mathcal{B}$ they are non-isomorphic. The conclusion follows.
\end{proof}

\section{Stratification properties}

In this section we study the stratification properties of directed categories. First we introduce some background knowledge on stratification theory. For more details, see \cite{CPS,DR,ES,Xi}.

Let $\Lambda$ be a finite-dimensional algebra and suppose that $_{\Lambda} \Lambda$ has $n$ indecomposable summands. Let $\preccurlyeq$ be a preorder on the set $[n] = \{ i \mid 1 \leqslant i \leqslant n\}$. For $i \in [n]$, we let $P_i$ be the corresponding indecomposable projective $\Lambda$-module, and let $S_i$ be its top. According to \cite{CPS}, $\Lambda$ is \textit{standardly stratified} with respect to $\preccurlyeq$ if there exist indecomposable modules $\Delta_i$, called \textit{standard modules}, such that the following conditions hold:
\begin{enumerate}
\item the number of composition factors $[\Delta_i, S_j] = 0$ unless $j \preccurlyeq i$ for $i, j \in [n]$;
\item there is an exact sequence $0 \rightarrow K_i \rightarrow P_i \rightarrow \Delta_i \rightarrow 0$ for every $i \in [n]$ such that $K_i$ has a filtration by standard modules $\Delta _j$ with $j \succ i$.
\end{enumerate}
If furthermore the endomorphism algebra of every standard module has dimension 1, then $\Lambda$ is called a \textit{quasi-hereditary algebra}.

Actually, the $i$-th standard module $\Delta_i$ can be defined as the largest quotient of $P_i$ all of whose composition factors $S_j$ satisfy $j \preccurlyeq i$. This works for arbitrary algebras $\Lambda$ even if it is not standardly stratified. The $i$-th \textit{proper standard module} $\overline{\Delta}_i$ is defined to be the largest quotient of $P_i$ all of whose composition factors $S_j$ satisfy $j \prec i$ except for a single copy of $S_i$, where $j \prec i$ means $j \preccurlyeq i$ but $i \npreceq j$. By considering indecomposable injective modules and largest submodules, we can define dually \textit{costandard modules} $\nabla_i$ and \textit{proper costandard modules} $\overline {\nabla}_i$. Let $\mathcal{F} _{\Lambda} (\Delta)$ be the full subcategory of $A$-mod such that each module in it has a filtration by standard modules. Similarly we define categories $\mathcal{F} _{\Lambda} (\overline {\Delta})$, $\mathcal{F} _{\Lambda} (\nabla)$, and $\mathcal{F} _{\Lambda} (\overline {\nabla})$.

It is clear that if $\Lambda$ is standardly stratified with respect to $\preccurlyeq$, then $_{\Lambda} \Lambda \in \mathcal{F} _{\Lambda} (\Delta)$. The converse of this statement is also true if the partial order associated to $\preccurlyeq$ is a linear order, as explained in 2.2.3 of \cite{CPS} and pp 12-13 of \cite{Webb2}. Since this condition holds in our context, we take the equivalent condition. That is, we say $\Lambda$ is \textit{standardly stratified} with respect to $\preccurlyeq$ if $_{\Lambda} \Lambda \in \mathcal{F} _{\Lambda} (\Delta)$. It is said to be \textit{properly stratified} if $_{\Lambda} \Lambda \in \mathcal{F} _{\Lambda} (\Delta) \cap \mathcal{F} _{\Lambda} (\overline {\Delta})$. The reader can see from the definition that quasi-hereditary algebras are properly stratified, and properly stratified algebras are standardly stratified.

Now let $\mathcal{A}$ be a connected finite directed category with the linear order $\leqslant$ on $\Ob \mathcal{A}$. This linear order $\leqslant$ induces a preorder $\preccurlyeq$ on the set of isomorphism classes of simple $\mathcal{A}$-modules as follows. Recall in Section 2 we have chosen a fixed set $E_x$ of primitive orthogonal idempotents with $\sum _{e \in E_x} e = 1_x$ for every object $x \in \Ob \mathcal{A}$, and defined $E$ to be the disjoint union of these sets. Therefore, for $e \in E_x$ and $e' \in E_y$, we let $e \preccurlyeq e'$ if $x \leqslant y$. The reader can check that $\preccurlyeq$ defined in this way is indeed a preorder, but in general is not a partial order. Moreover, if every object $x$ in $\mathcal{A}$ is \textit{primitive}, i.e., $1_x$ is a primitive idempotent, then $\preccurlyeq$ coincide with $\leqslant$. Therefore, $(E, \preccurlyeq)$ is a preordered set indexing all indecomposable summands of $_\mathcal{A} \mathcal{A}$. Note that $\mathcal{A}$ might have isomorphic indecomposable summands. This is allowed since if $P = \mathcal{A} e$ and $Q = \mathcal{A} f$ are isomorphic indecomposable projective $\mathcal{A}$-modules, then we can find an object $x$ and the corresponding set $E_x$ such that both $e$ and $f$ lie in $E_x$. Therefore, we have $e \preccurlyeq f$ and $f \preccurlyeq e$.

Results in the following proposition have been described in \cite{Li3} (see Section 4) and \cite{Li5}.

\begin{proposition}
Let $\mathcal{A}$ and $\preccurlyeq$ be as above. Then:
\begin{enumerate}
\item Every standard module is isomorphic to an indecomposable summand of $\mathcal{A} (x, x)$ for some $x \in \Ob \mathcal{A}$, where we identify $\bigoplus _{x \in \Ob \mathcal{A}} \mathcal{A} (x, x)$ with the quotient module $\mathcal{A} / J$ and $J$ is the two-sided ideal constituted of all non-endomorphisms in $\mathcal{A}$.
\item $\mathcal{A}$ is standardly stratified with respect to $\preccurlyeq$ if and only if $\mathcal{A} (x,y)$ is a projective $\mathcal{A} (y,y)$-module for all $x, y \in \Ob \mathcal{A}$.
\end{enumerate}
\end{proposition}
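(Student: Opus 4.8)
The plan is to prove the two parts separately, using throughout the two-sided ideal $J$ of $A$ spanned by all non-endomorphisms of $\mathcal{A}$, so that $A/J \cong \bigoplus_{x \in \Ob \mathcal{A}} \mathcal{A}(x,x)$ and an $\mathcal{A}$-module is annihilated by $J$ precisely when its support is a single object.

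\emph{Part (1).} I would fix $x \in \Ob \mathcal{A}$ and $e \in E_x$, and let $P_e = \mathcal{A} e$ be the corresponding indecomposable projective. Since $P_e(z) = 1_z \mathcal{A} e \subseteq \mathcal{A}(x,z)$, the support of $P_e$, and hence of any of its quotients, lies in $\{ z \mid z \geqslant x \}$. Thus if $Q$ is a quotient of $P_e$ whose composition factors $S_f$ all satisfy $f \preccurlyeq e$, then a composition factor of $Q$ over an object $z$ forces both $z \geqslant x$ (support) and $z \leqslant x$ (since $f \in E_z$ and $f \preccurlyeq e$); hence $z = x$, so $Q$ is supported only on $x$, annihilated by $J$, and therefore a quotient of $P_e / JP_e = (A/J)e \cong \mathcal{A}(x,x)e$. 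Conversely $\mathcal{A}(x,x)e$ is itself supported only on $x$ with all composition factors $S_f$, $f \in E_x$, so it is one of the competing quotients; since $\Delta_e$ is by definition the largest such quotient, it must coincide with $\mathcal{A}(x,x)e$, the indecomposable summand of $\mathcal{A}(x,x)$ attached to $e$. This proves (1), and along the way records the fact, used below, that $\Delta_f$ is supported on the single object $z$ whenever $f \in E_z$.

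\emph{Part (2).} Here $\mathcal{A}$ is standardly stratified with respect to $\preccurlyeq$ if and only if every $P_e$ lies in $\mathcal{F}_{\mathcal{A}}(\Delta)$, so I would argue in terms of the $P_e$. For the ``if'' direction of the criterion, list $\Ob \mathcal{A} = \{ z_1 < z_2 < \dots < z_m \}$ and, for $e \in E_x$, consider the descending filtration of $P_e$ by the submodules $H_j = \mathcal{A} \cdot \sum_{i \geqslant j} P_e(z_i)$ generated by the indicated values. Because $\mathcal{A}$ is directed and $\leqslant$ is linear, $\mathcal{A}(z_i, z_j) = 0$ whenever $i > j$, and a short computation then shows that $H_j / H_{j+1}$ is supported only on $z_j$, where its value is $P_e(z_j) = \mathcal{A}(x, z_j)e$; being $\mathcal{A}(x,z_j)e$ for an idempotent $e$, this is a direct summand of $\mathcal{A}(x, z_j)$ as a left $\mathcal{A}(z_j, z_j)$-module, hence projective over $\mathcal{A}(z_j, z_j)$ by hypothesis, hence a direct sum of modules $\mathcal{A}(z_j, z_j)g = \Delta_g$ by part (1). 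So each $P_e$, and therefore $_{\mathcal{A}}\mathcal{A}$, lies in $\mathcal{F}_{\mathcal{A}}(\Delta)$. Conversely, if $\mathcal{A}$ is standardly stratified, I would fix $x, y$ and $e \in E_x$; evaluating a $\Delta$-filtration of $P_e$ at $y$ (an exact operation) and using that $\Delta_f(y) = \mathcal{A}(y,y)f$ when $f \in E_y$ while $\Delta_f(y) = 0$ for $f$ over any other object, one obtains a finite filtration of $P_e(y)$ by projective $\mathcal{A}(y,y)$-modules. Then $\mathcal{A}(x,y) = \bigoplus_{e \in E_x} P_e(y)$ is projective over $\mathcal{A}(y,y)$, provided one knows that a module admitting a finite filtration by projectives is itself projective.

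\emph{Main obstacle.} That last claim is the only genuinely non-formal ingredient, and I expect it to be the crux: it holds because $\Ext^1$ out of a projective module vanishes, so in any filtration $0 = M_0 \subseteq \dots \subseteq M_r = M$ with projective subquotients the extension $0 \to M_{r-1} \to M \to M/M_{r-1} \to 0$ splits, and one concludes by induction on $r$. Everything else is bookkeeping, but bookkeeping that must be done carefully: keeping the preorder $\preccurlyeq$ distinct from the linear order $\leqslant$; checking that the support and composition-factor constraints really do pin $\Delta_e$ down to $\mathcal{A}(x,x)e$; and verifying the vanishing $\mathcal{A}(z_i,z_j) = 0$ for $i > j$ that makes the subquotients $H_j/H_{j+1}$ come out as claimed — all of which are immediate from the definitions and the description of $A/J$.
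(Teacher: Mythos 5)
Your proof is correct. Note that the paper does not actually prove this proposition; it only records that the results "have been described in \cite{Li3} (see Section 4) and \cite{Li5}," so there is no in-text argument to compare against. Your argument is a complete, self-contained proof along the lines one would expect: the support/composition-factor squeeze $z \geqslant x$ and $z \leqslant x$ correctly identifies $\Delta_e$ with $P_e/JP_e \cong \mathcal{A}(x,x)e$; the filtration $H_j = \mathcal{A}\cdot\sum_{i\geqslant j}P_e(z_i)$ together with the vanishing $\mathcal{A}(z_i,z_j)=0$ for $i>j$ gives the $\Delta$-filtration in one direction; and in the other direction the exactness of $M \mapsto 1_yM$ plus the splitting of filtrations with projective top quotients yields projectivity of $\mathcal{A}(x,y)$ over $\mathcal{A}(y,y)$. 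The one step you flagged as the crux (a module with a finite filtration by projectives is projective) is handled correctly by the inductive splitting argument, and you are also right to work with the paper's adopted definition of standardly stratified as $_{\mathcal{A}}\mathcal{A} \in \mathcal{F}_{\mathcal{A}}(\Delta)$.
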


Note that every finite dimensional algebra $A$ can be regarded as a directed category $\mathcal{A}$ with one object $x$. The reader may want to know stratifications of this trivial category. Let us consider it in details to explain the above proposition. First, let us choose a set of primitive orthogonal idempotents $E = \{e_i\} _{i \in [n]}$ such that $1 = \sum _{i \in [n]} e_i$. Since there is only one object $A$, the linear order $\leqslant$ is trivial. Moreover, for $i, j \in [n]$, since $e_i$ and $e_j$ correspond to the same object $x$, we have $e_i \preccurlyeq e_j$ and $e_j \preccurlyeq e_i$ simultaneously. Therefore, the trivial linear order gives rise to the trivial preorder (not a partial order if $A$ is local) $\preccurlyeq$ on the chosen set of primitive orthogonal idempotents. Using the definition, we conclude that standard modules are precisely indecomposable projective modules, and $A$ is standardly stratified with respect to this trivial preorder.

In the rest of this section we assume that every object $x$ in $\mathcal{A}$ is primitive. By definition, the identity morphism $1_x$ is a primitive idempotent in the associated algebra $A$. Therefore, the endomorphism algebra $\mathcal{A} (x,x)$ is a finite-dimensional local algebra. Consequently, the associated algebra $A$ of $\mathcal{A}$ is a basic algebra since $\{ 1_x \} _{x \in \Ob \mathcal{A}}$ is a set of primitive orthogonal idempotents satisfying $\sum _{x \in \Ob \mathcal{A}} 1_x = 1$ and $A 1_x \cong A 1_y$ if and only if $x = y$. Moreover, the preorder $\preccurlyeq$ we defined before coincides with the given linear order $\leqslant$. Examples of these finite directed categories are described in \cite{Li4,Li5}. The following proposition asserts that these directed categories are characterized by their stratification properties.

\begin{proposition}
Let $A$ be a basic finite-dimensional algebra with $n$ isomorphism classes of simple modules. Let $\leqslant$ be a linear order on $[n]$. Then the following are equivalent:
\begin{enumerate}
\item Every standard module $\Delta_i$ has only composition factors isomorphic to $S_i$, $i \in [n]$.
\item Every proper standard modules $\overline{\Delta}_i$ is simple, i.e., isomorphic to $S_i$, $i \in [n]$.
\item The associated category $\mathcal{A}$ is a directed category with respect to $\leqslant$.
\end{enumerate}
\end{proposition}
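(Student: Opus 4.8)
The plan is to establish the cycle of implications $(3)\Rightarrow(1)\Rightarrow(2)\Rightarrow(3)$, after first translating $(3)$ into a statement about composition factors of the indecomposable projectives. Since $A$ is basic, the primitive idempotents are exactly the identity morphisms $1_x$, the object $i$ carries the unique simple $S_i$, and the value at $j$ of the indecomposable projective $P_i=A1_i$ is $1_jA1_i=\mathcal{A}(i,j)$, which is nonzero precisely when $S_j$ is a composition factor of $P_i$. Hence $\mathcal{A}$ is directed with respect to $\leqslant$ --- i.e. $\mathcal{A}(i,j)=0$ unless $i\leqslant j$ --- if and only if the composition factors of each $P_i$ lie among $\{S_j\mid j\geqslant i\}$.

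Granting this reformulation, $(3)\Rightarrow(1)$ is immediate: a quotient of $P_i$ has composition factors among those of $P_i$, hence among $\{S_j\mid j\geqslant i\}$, while $\Delta_i$ by construction has composition factors among $\{S_j\mid j\leqslant i\}$; intersecting the two conditions leaves only $S_i$. For $(1)\Rightarrow(2)$, observe that any quotient $Q$ of $P_i$ whose composition factors are all $S_j$ with $j<i$ together with a single copy of $S_i$ has all its composition factors $\leqslant i$, so $Q$ factors through $\Delta_i$; thus $\overline{\Delta}_i$ is a quotient of $\Delta_i$, hence by $(1)$ has only copies of $S_i$ as composition factors, and having exactly one such factor it must be $S_i$ itself.

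The heart of the argument is $(2)\Rightarrow(3)$, for which I would first isolate a one-layer criterion: $\overline{\Delta}_i\cong S_i$ if and only if $\Top(\rad P_i)=\rad P_i/\rad^2 P_i$ has no composition factor $S_j$ with $j<i$. The point is that $P_i$ has a proper quotient with simple top $S_i$ all of whose remaining composition factors satisfy $j<i$ exactly when $\rad P_i$ has a nonzero quotient with all composition factors $<i$, which happens iff $\Top(\rad P_i)$ has a summand $S_j$ with $j<i$; such a quotient would contradict the maximality defining $\overline{\Delta}_i=S_i$, and conversely its absence forces, via Nakayama's lemma, every admissible quotient down to $S_i$. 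Now assume $(2)$. I claim, by induction on $m\geqslant 1$ and for each fixed $i$, that $\rad^m P_i/\rad^{m+1}P_i$ has composition factors only among $\{S_k\mid k\geqslant i\}$; the case $m=1$ is the criterion just established. For the inductive step, the inductive hypothesis lets us write the projective cover of $\rad^m P_i$ as $\bigoplus_{k\geqslant i}P_k^{(c_k)}$, so that $\rad^{m+1}P_i=\rad(\rad^m P_i)$ is a quotient of $\bigoplus_{k\geqslant i}(\rad P_k)^{(c_k)}$, whence $\Top(\rad^{m+1}P_i)$ is a quotient of $\bigoplus_{k\geqslant i}\Top(\rad P_k)^{(c_k)}$; applying the criterion coming from $(2)$ to each index $k\geqslant i$, every composition factor appearing is some $S_l$ with $l\geqslant k\geqslant i$. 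Since the radical filtration exhausts the composition factors of $P_i$ and the top layer is $S_i$, we conclude $[P_i:S_j]=0$ for $j<i$, which by the first step is exactly $(3)$.

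The step I expect to be the main obstacle is this last induction. The condition supplied by $(2)$ controls only the first radical layer of each $P_i$, and it must be pushed through all the deeper layers; the subtle point is that $\rad P_i$ may have $S_i$ in its own top (there is no obstruction to a nontrivial loop $\mathcal{A}(i,i)$ beyond locality of that endomorphism algebra), so naive bookkeeping of composition-factor multiplicities breaks down, and one must re-invoke the hypothesis $(2)$ at every layer --- not merely one step below --- for the propagation to close.
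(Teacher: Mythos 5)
Your proof is correct, but it closes the cycle along a genuinely different path from the paper's. The paper proves $(3)\Rightarrow(1),(2)$ by citing its Proposition 3.1 (standard modules of a directed category are the indecomposable summands of the endomorphism algebras $\mathcal{A}(x,x)$, hence supported on a single object), proves $(2)\Rightarrow(1)$ by contraposition, and then handles the hard direction as $(1)\Rightarrow(3)$ by induction on the number $n$ of simples: since $\Delta_n=P_n$ for the maximal index, hypothesis $(1)$ forces $e_jAe_n=0$ for $j\neq n$, exhibiting $A$ as a one-point triangular extension of $A_1=(1-e_n)A(1-e_n)$, to which the induction hypothesis applies. You instead take the hard direction to be $(2)\Rightarrow(3)$ and prove it by an induction on the radical filtration of each fixed $P_i$, after translating $(2)$ into the one-layer condition that $\Top(\rad P_k)$ has no factor $S_j$ with $j<k$; your preliminary reformulation of $(3)$ as $[P_i:S_j]=0$ for $j<i$, and your quotient arguments for $(3)\Rightarrow(1)\Rightarrow(2)$, are also correct. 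Your approach is more self-contained (it does not invoke the structural description of standard modules, nor the compatibility of standard modules with the idempotent truncation that the paper's induction implicitly uses), at the cost of being longer; the layer-by-layer propagation you flag as the delicate point is handled correctly, since your inductive hypothesis only ever asserts factors $\geqslant i$ in $\Top(\rad^mP_i)$ and the projective-cover step tolerates the reappearance of $S_i$ there.
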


Note that this is true even if $A$ is not standardly stratified.

\begin{proof}
Suppose that $\mathcal{A}$ is a directed category. Note that every object is primitive. By the previous proposition, every standard module $\Delta_i$ is supported on one object. Equivalently, $\Delta_i$ has only composition factors isomorphic to $S_i$. Clearly, $\overline {\Delta_i} \cong S_i$. Thus (3) implies (1) and (2). It is also clear that if $\Delta_i$ has composition factors not isomorphic to $S_i$, then the top of $\rad \Delta_i$ must have a simple summand not isomorphic to $S_i$. Consequently, $\overline {\Delta}_i$ has composition factors not isomorphic to $S_i$, and hence is not simple. Thus (2) implies (1).

Now we prove $(1)$ implies (3) by induction. Without loss of generality we assume that $n$ is the maximal element in $[n]$ with respect to $\leqslant$. The conclusion is trivially true for $n=1$. If $n >1$, take $e_n$ to be a primitive idempotent in $A$ such that $P_n = A e_n$ is a projective cover of $S_n$. Clearly, $P_n \cong \Delta_n$, so it has only composition factors isomorphic to $S_n$ by the given condition. It is straightforward to see that $A$ has the following description where $A_1 = (1-e_n) A (1-e_n)$ and $A_2 = e_n A e_n$.
\begin{equation*}
\xymatrix {\bullet \ar@(ld,lu)[]|{A_1} \ar[rr]^{e_n A (1-e_n)} & & \bullet \ar@(rd,ru)[]|{A_2}}
\end{equation*}
By induction hypothesis, the associated category $\mathcal{A}_1$ of $A_1$ is directed with respect to the linear order on $[n-1]$ inherited from $\leqslant$. Therefore, $\mathcal{A}$ is directed with respect to $\leqslant$.
\end{proof}

Since all proper standard modules are simple, $\mathcal{A}$ is actually properly stratified with respect to $\leqslant$. It is also straightforward to see that $\mathcal{A}$ is quasi-hereditary with respect to $\leqslant$ if and only if $A$ is a quotient algebra of a finite-dimensional hereditary algebra. Moreover, the reader can check that all costandard modules of $\mathcal{A}$ are precisely indecomposable injective modules.

For an arbitrary standardly stratified algebra $\Lambda$, it is well known that $\mathcal{F} _{\Lambda} (\Delta)$ is closed under direct summands, extensions, kernels of epimorphisms, but in general it is not closed under cokernels of monomorphisms. Actually, $\mathcal{F} _{\Lambda} (\Delta)$ has this property if and only if $\mathcal{F} _{\Lambda} (\Delta) = \mathcal{P} ^f (\Lambda)$, where $\mathcal{P} ^f {\Lambda}$ is the full subcategory of $\Lambda$-mod constituted of all objects with finite projective dimension. This simple observation gives a possible approach to answer the question of Platzeck and Reiten in \cite{PR}: under what conditions these two subcategories of $\Lambda$-mod coincide.

\begin{proposition}
Let $\Lambda$ be as above. Then the following are equivalent:
\begin{enumerate}
\item $\mathcal{F} _{\Lambda} (\Delta) = \mathcal{P}^f (\Lambda)$.
\item $\mathcal{F} _{\Lambda} (\Delta)$ is closed under the cokernels of monomorphisms.
\item The cokernel of every monomorphism $\iota: \Delta_i \rightarrow P$ is contained in $\mathcal{F} _{\Lambda} (\Delta)$, where $\Delta_i$ is a standard module and $P$ is an arbitrary projective module.
\end{enumerate}
\end{proposition}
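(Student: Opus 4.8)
The plan is to prove $(1)\Rightarrow(2)\Rightarrow(3)\Rightarrow(1)$, the first two implications being formal. For $(1)\Rightarrow(2)$: $\mathcal{P}^f(\Lambda)$ is \emph{always} closed under cokernels of monomorphisms, since for a short exact sequence $0\to A\to B\to C\to 0$ the long exact sequence of $\Ext^{*}_{\Lambda}(-,X)$ gives $\pd C\leqslant\max\{\pd A+1,\pd B\}$, which is finite whenever $\pd A$ and $\pd B$ are; hence if $\mathcal{F}_{\Lambda}(\Delta)=\mathcal{P}^f(\Lambda)$ then $\mathcal{F}_{\Lambda}(\Delta)$ inherits this property. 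For $(2)\Rightarrow(3)$: both $\Delta_i$ and any projective $P$ lie in $\mathcal{F}_{\Lambda}(\Delta)$ (recall $_{\Lambda}\Lambda\in\mathcal{F}_{\Lambda}(\Delta)$ since $\Lambda$ is standardly stratified), so by $(2)$ the cokernel of every monomorphism $\Delta_i\hookrightarrow P$ lies in $\mathcal{F}_{\Lambda}(\Delta)$.

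For $(3)\Rightarrow(1)$ one inclusion is automatic: $\mathcal{F}_{\Lambda}(\Delta)\subseteq\mathcal{P}^f(\Lambda)$, because each standard module has finite projective dimension (downward induction on the linear order using the sequences $0\to K_i\to P_i\to\Delta_i\to 0$, whose kernels $K_i$ are filtered by standard modules of strictly larger index) and finite projective dimension is closed under extensions. So it remains to show $\mathcal{P}^f(\Lambda)\subseteq\mathcal{F}_{\Lambda}(\Delta)$, which I would do by induction on $\pd M$ for $M\in\mathcal{P}^f(\Lambda)$: the case $\pd M=0$ is clear, and for $\pd M\geqslant 1$ a syzygy sequence $0\to\Omega M\to P\to M\to 0$ with $P$ projective has $\pd\Omega M=\pd M-1$, so $\Omega M\in\mathcal{F}_{\Lambda}(\Delta)$ by induction. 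Thus everything reduces to the key assertion: \emph{if $0\to N\to P\to M\to 0$ is exact with $N\in\mathcal{F}_{\Lambda}(\Delta)$ and $P$ projective, then $M\in\mathcal{F}_{\Lambda}(\Delta)$}. The natural attack is a dévissage along a $\Delta$-filtration $0=N_0\subset N_1\subset\cdots\subset N_t=N$: supposing $P/N_{s-1}\in\mathcal{F}_{\Lambda}(\Delta)$, the module $\Delta_{i_s}\cong N_s/N_{s-1}$ embeds into $P/N_{s-1}$ with cokernel $P/N_s$, and one wants $P/N_s\in\mathcal{F}_{\Lambda}(\Delta)$; for $s=1$ this is exactly hypothesis $(3)$, because the ambient module is still projective.

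The hard part will be the remaining steps of this dévissage: after the first one the ambient module $P/N_1$ is only $\Delta$-filtered and no longer projective, so $(3)$ cannot be quoted directly, and re-expressing $P/N_1$ as a quotient of a projective reintroduces a syzygy whose $\Delta$-filtration length is not controlled by $t$, so a straightforward induction on $t$ becomes circular. My intended fix is to prove in tandem the more flexible statement that $0\to N\to Y\to M\to 0$ with $N,Y\in\mathcal{F}_{\Lambda}(\Delta)$ and $\pd M<\infty$ forces $M\in\mathcal{F}_{\Lambda}(\Delta)$, organising the induction primarily on $\pd M$ (which stays finite through every reduction) and secondarily on the $\Delta$-filtration length, and using that $\mathcal{F}_{\Lambda}(\Delta)$ is closed under extensions and under kernels of epimorphisms to move back and forth between a projective presentation and an $\mathcal{F}_{\Lambda}(\Delta)$ presentation. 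A cleaner alternative, if one borrows from the stratification references cited above, is the homological description of $\mathcal{F}_{\Lambda}(\Delta)$ as a left $\Ext$-orthogonal — that $X\in\mathcal{F}_{\Lambda}(\Delta)$ as soon as $\Ext^1_{\Lambda}(X,\overline{\nabla}_j)=0$ for every $j$ — which turns the key assertion into the vanishing of $\Ext^1_{\Lambda}(M,\overline{\nabla}_j)$, to be extracted from $(3)$ (read as $\Ext^1_{\Lambda}(P/\Delta_i,\overline{\nabla}_j)=0$) together with the long exact sequences attached to the $\Delta$-filtration of $N$. In either approach, verifying that the reduction actually terminates without circularity is the crux of the proof.
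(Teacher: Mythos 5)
Your treatment of $(1)\Rightarrow(2)$ and of $(2)\Rightarrow(3)$ is correct, and your $(1)\Leftrightarrow(2)$ argument (dimension-shifting along a minimal projective resolution, plus the observation that each $\Delta_i$ has finite projective dimension because $\Lambda$ is standardly stratified) is essentially the same as the paper's. The problem is that the one genuinely nontrivial implication, $(3)\Rightarrow(2)$ (equivalently the ``key assertion'' you isolate inside $(3)\Rightarrow(1)$), is never actually proved. You correctly diagnose why the naive d\'evissage fails: after quotienting out the first layer $N_1\cong\Delta_{i_1}$ of the filtration, the ambient module $P/N_1$ is only $\Delta$-filtered, hypothesis $(3)$ no longer applies, and replacing $P/N_1$ by a projective cover produces a syzygy whose filtration length is uncontrolled, so the induction is circular. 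But having named the obstruction, you only gesture at two possible repairs (a double induction on $\pd M$ and filtration length; or the characterization of $\mathcal{F}_\Lambda(\Delta)$ as the left $\Ext$-orthogonal of the proper costandard modules) and explicitly concede that you have not verified either one terminates. Neither sketch is detailed enough to check: for instance, in the $\Ext$-orthogonality route, the long exact sequence attached to $0\to N\to P\to M\to 0$ only exhibits $\Ext^1_\Lambda(M,\overline{\nabla}_j)$ as a quotient of $\Hom_\Lambda(N,\overline{\nabla}_j)$, and showing the connecting map vanishes is exactly where $(3)$ must be injected --- that step is missing. As written, the proposal proves $(1)\Leftrightarrow(2)\Rightarrow(3)$ but not $(3)\Rightarrow(2)$, so the three conditions are not shown to be equivalent.

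For comparison, the paper does not prove $(3)\Rightarrow(2)$ from scratch either: it quotes the equivalence of $(2)$ and $(3)$ as the first statement of Theorem 0.3 in \cite{Li5}, and then only proves $(1)\Leftrightarrow(2)$ by the same syzygy argument you give. So your instinct about where the difficulty sits is exactly right, and the honest options are either to cite that external result as the paper does, or to actually carry out one of your two sketched strategies in full --- at present the crux you yourself identify remains open.
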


\begin{proof}
The equivalence of (2) and (3) is the first statement of Theorem 0.3 in \cite{Li5}. Thus it is sufficient to show the equivalence of (1) and (2). If $\mathcal{F} _{\Lambda} (\Delta) = \mathcal{P}^f (\Lambda)$, then it is closed under cokernels of monomorphisms since $\mathcal{P}^f (\Lambda)$ has this property. Conversely, suppose that $\mathcal{F} _{\Lambda} (\Delta)$ has this property. Take an arbitrary $\Lambda$-module $M$ with $\pd _{\Lambda} M = n < \infty$ and consider a minimal projective resolution $P^{\bullet}$ of $M$. Clearly, $P^s = 0$ for $s > n$, and $\Omega ^n M \cong P^n \in \mathcal{F} _{\Lambda} (\Delta)$. By considering the exact sequence $0 \rightarrow \Omega^n M \rightarrow P^{n-1} \rightarrow \Omega ^{n-1} M \rightarrow 0$ we deduce that $\Omega ^{n-1} M \in \mathcal{F} _{\Lambda} (\Delta)$ since the first two terms lie in this category, and it is closed under cokernels of monomorphisms. Continuing this process we get $M \in \mathcal{F} _{\Lambda} (\Delta)$. Therefore, $\mathcal{P} ^f (\Lambda) \subseteq \mathcal{F} _{\Lambda} (\Delta)$. The other inclusion is clear.
\end{proof}

When $\mathcal{A}$ is a directed category, we have:

\begin{proposition}
If $\mathcal{A}$ is standardly stratified with respect to $\leqslant$, then:
\begin{enumerate}
\item $\mathcal{F} _{\mathcal{A}} (\Delta)$ is closed under cokernels of monomorphisms.
\item $\mathcal{F} _{\mathcal{A}} (\Delta) = \mathcal{P} ^f (\mathcal{A})$.
\item An $\mathcal{A}$-module $M$ has finite projective dimension if and only if for every $x \in \Ob \mathcal{A}$, $M(x)$ is a free $\mathcal{A} (x, x)$-module.
\end{enumerate}
\end{proposition}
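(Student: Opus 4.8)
The plan is to first pin down which $\mathcal{A}$-modules lie in $\mathcal{F}_{\mathcal{A}}(\Delta)$ and then feed this into the immediately preceding proposition. Recall that, since every object of $\mathcal{A}$ is primitive, the preorder $\preccurlyeq$ agrees with $\leqslant$; by the proposition on standard modules each $\Delta_x$ is $\mathcal{A}(x,x)$ regarded as a representation supported on the single object $x$; the ring $\mathcal{A}(x,x)$ is local; and standard stratification with respect to $\leqslant$ means exactly that $\mathcal{A}(z,x)$ is a free $\mathcal{A}(x,x)$-module for all $z\leqslant x$. The \emph{characterization lemma} I would prove is: $M\in\mathcal{F}_{\mathcal{A}}(\Delta)$ if and only if $M(x)$ is a free $\mathcal{A}(x,x)$-module for every $x\in\Ob\mathcal{A}$. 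The "only if" direction is easy: applying the exact functor $1_x\cdot-$ to a $\Delta$-filtration of $M$ yields a filtration of $M(x)$ whose sections are either $0$ or $\mathcal{A}(x,x)$ itself (free of rank one), and over a local ring an extension of a free module by a free module splits, so induction on the filtration length shows $M(x)$ is free.

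For the "if" direction I would induct on $n=|\Ob\mathcal{A}|$, peeling off the maximal object $x_n$. Because $x_n$ is maximal there are no nonzero morphisms out of it, so $M(x_n)$, with value $0$ at every other object, is a submodule $M'\subseteq M$ isomorphic to a direct sum of copies of $\Delta_{x_n}$ (as many as the rank of the free module $M(x_n)$); hence $M'\in\mathcal{F}_{\mathcal{A}}(\Delta)$. The quotient $\bar M=M/M'$ is supported on the full subcategory $\mathcal{B}$ on $\{x_1,\dots,x_{n-1}\}$. Maximality of $x_n$ makes extension-by-zero an equivalence between $\mathcal{B}$-mod and the full subcategory of $\mathcal{A}$-mod of modules supported on $\mathcal{B}$, and it matches $\Delta$-filtrations on the two sides (the standard modules of $\mathcal{B}$ being precisely the $\Delta_{x_i}$ with $i<n$). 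Since $\mathcal{B}$ is again a directed category with primitive objects, standardly stratified for the inherited order, and $\bar M(x_i)=M(x_i)$ is free over $\mathcal{B}(x_i,x_i)=\mathcal{A}(x_i,x_i)$, the induction hypothesis gives $\bar M\in\mathcal{F}_{\mathcal{B}}(\Delta)$, hence $\bar M\in\mathcal{F}_{\mathcal{A}}(\Delta)$; and then $M\in\mathcal{F}_{\mathcal{A}}(\Delta)$ because $\mathcal{F}_{\mathcal{A}}(\Delta)$ is closed under extensions.

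With the lemma in hand, I would obtain (1) and (2) by verifying condition (3) of the immediately preceding proposition, namely that $\mathrm{coker}\,\iota\in\mathcal{F}_{\mathcal{A}}(\Delta)$ for every monomorphism $\iota\colon\Delta_x\to P$ with $P$ projective. Since kernels of natural transformations are computed objectwise, $\iota$ being injective means each $\iota_y$ is injective. For $y\neq x$ the source $\Delta_x(y)$ is $0$, so $(\mathrm{coker}\,\iota)(y)=P(y)$, which is a direct sum of modules $\mathcal{A}(z,y)$ and hence free over $\mathcal{A}(y,y)$. At $x$ the map $\iota_x\colon\mathcal{A}(x,x)\to P(x)\cong\mathcal{A}(x,x)^{m}$ has the form $r\mapsto r\phi$ for some $\phi\in\mathcal{A}(x,x)^m$; if every coordinate of $\phi$ lay in $\rad\mathcal{A}(x,x)$, then, choosing $N$ with $(\rad\mathcal{A}(x,x))^N=0$, the nonzero ideal $(\rad\mathcal{A}(x,x))^{N-1}$ would be contained in $\ker\iota_x$, contradicting injectivity; so some coordinate of $\phi$ is a unit, $\iota_x$ is a split monomorphism, and $(\mathrm{coker}\,\iota)(x)\cong\mathcal{A}(x,x)^{m-1}$ is free. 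By the characterization lemma $\mathrm{coker}\,\iota\in\mathcal{F}_{\mathcal{A}}(\Delta)$, so (1) and (2) hold. Part (3) is then immediate: if $\pd_{\mathcal{A}}M<\infty$ then $M\in\mathcal{P}^f(\mathcal{A})=\mathcal{F}_{\mathcal{A}}(\Delta)$ by (2), so every $M(x)$ is free; conversely if every $M(x)$ is free then $M\in\mathcal{F}_{\mathcal{A}}(\Delta)=\mathcal{P}^f(\mathcal{A})$, so $\pd_{\mathcal{A}}M<\infty$. I expect the main obstacle to be precisely the splitness of $\iota_x$: this is the one place where one genuinely uses both that $\mathcal{A}(x,x)$ is local with nilpotent radical and that $P(x)$ is $\mathcal{A}(x,x)$-free, and it is the reason the equality $\mathcal{F}(\Delta)=\mathcal{P}^f$ holds here while it can fail for general standardly stratified algebras.
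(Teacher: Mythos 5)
Your proof is correct, and it is noticeably more self-contained than the paper's. The paper disposes of (1) by citing Proposition 1.4 of \cite{Li5}, obtains (2) immediately from (1) via the preceding proposition, and for (3) asserts essentially without argument the equivalence you isolate as your characterization lemma ($M\in\mathcal{F}_{\mathcal{A}}(\Delta)$ iff every $M(x)$ is $\mathcal{A}(x,x)$-free). You instead prove that lemma in full --- the ``only if'' direction by splitting off the projective top sections of the induced filtration of $M(x)$, the ``if'' direction by peeling off the maximal object and inducting on $|\Ob\mathcal{A}|$ --- and then derive (1) and (2) by verifying condition (3) of the preceding proposition rather than quoting \cite{Li5}. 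The valuewise analysis of a monomorphism $\Delta_x\to P$, in particular the observation that injectivity of $\iota_x$ forces a unit coordinate of $\phi$ and hence a splitting over the local ring $\mathcal{A}(x,x)$, is exactly the point that is special to directed categories with primitive objects and is nowhere made explicit in the paper; identifying it is the real content of your argument. What your route buys is an independent proof of (1); what it still borrows is the implication $(3)\Rightarrow(2)$ of the preceding proposition, whose proof in the paper is itself a citation to \cite{Li5}, so the external dependence is reduced but not eliminated. Two details worth spelling out in a final write-up: the subcategory $\mathcal{B}$ obtained by deleting the maximal object need not be connected (harmless, since everything is checked componentwise), and the matching of $\Delta$-filtrations over $\mathcal{B}$ and over $\mathcal{A}$ uses that $\mathcal{B}$ is an ideal of $\mathcal{A}$, so that extension by zero identifies $\mathcal{B}$-mod with the $\mathcal{A}$-modules vanishing at the deleted object.
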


\begin{proof}
The first statement is proved in Proposition 1.4 in \cite{Li5}, which implies the second one immediately. Now we prove (3).

Note that every standard module of $\mathcal{A}$ has the form of $\mathcal{A} (x, x)$ for some $x \in \Ob \mathcal{A}$. Therefore, if $M(x)$ is a free module for each $x \in \Ob \mathcal{A}$, it has a filtration by standard modules, so is contained in $\mathcal{F} _{\mathcal{A}} (\Delta) = \mathcal{P} ^f (\mathcal{A})$. Conversely, if $M \in \mathcal{F} _{\mathcal{A}} (\Delta) = \mathcal{P} ^f (\mathcal{A})$, then it has a filtration by standard modules, and from the description of standard modules we see $M(x) \cong \Delta_x ^s \cong \mathcal{A} (x, x)^s$, where $s = [M: \Delta_x]$
\end{proof}

Therefore, if $\mathcal{A}$ is standardly stratified with respect to $\leqslant$ and all objects are primitive, we have an explicit description for objects in $\mathcal{P} ^f (\mathcal{A})$. Unfortunately, for arbitrary finite directed categories, we cannot find such a description. Indeed, in the following example we show that for every linear order with respect to which $\mathcal{A}$ is standardly stratified, the category of modules with filtrations by standard modules is always a proper subcategory of $\mathcal{P} ^f (\mathcal{A})$.

\begin{example}
Let $A$ be the path algebra of the following quiver with relations $\delta^2 = \delta \gamma = \delta \epsilon = 0$ and $\gamma \alpha = \epsilon \beta$.
\begin{equation*}
\xymatrix{ & y \ar[dr] ^{\gamma} & \\ x \ar[ur] ^{\alpha} \ar[dr] ^{\beta} & & w \ar@(rd,ru)[]|{\delta} \\ & z \ar[ur] ^{\epsilon}}
\end{equation*}
Indecomposable projective modules are described as follows:
\begin{equation*}
P_x = \begin{matrix} & x & \\ y & & z \\ & w & \end{matrix}; \quad P_y = \begin{matrix} y \\ w \end{matrix}; \quad P_z = \begin{matrix} z \\ w \end{matrix}; \quad P_w = \begin{matrix} w \\ w \end{matrix}.
\end{equation*}
But for every linear with respect to which $A$ is standardly stratified, we can find an indecomposable module with finite projective dimension which does not have a filtration by standard modules. For example, if $y > x > z > w$, the standard modules are:
\begin{equation*}
\Delta_x = \begin{matrix} x \\ z \end{matrix}; \quad \Delta_y = \begin{matrix} y \\ w \end{matrix}; \quad \Delta_z = \begin{matrix} z \\ w \end{matrix}; \quad \Delta_w = \begin{matrix} w \\ w \end{matrix}.
\end{equation*}
Since $0 \rightarrow P_z \rightarrow P_x \rightarrow M = \begin{matrix} x \\ y \end{matrix} \rightarrow 0$ is exact, $\pd_A M = 1$. But $M$ has no filtration by standard modules.
\end{example}

The reader may want to know when a finite directed category $\mathcal{A}$ is quasi-hereditary with respect to the given linear order. The following proposition answer this question.

\begin{corollary}
Let $A$ be a basic finite-dimensional algebra. Then the following are equivalent:
\begin{enumerate}
\item $A$ is a quotient algebra of a finite-dimensional hereditary algebra.
\item $A$ is standardly stratified with respect to a linear order $\leqslant$, and all standard modules are simple.
\item $A$ is standardly stratified and $\mathcal{F} _A (\Delta) = A$-mod.
\end{enumerate}
\end{corollary}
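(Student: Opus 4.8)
The plan is to close the cycle $(1)\Rightarrow(2)\Rightarrow(3)\Rightarrow(1)$, each arrow being a short deduction from material already proved in this section. Two standing observations will be used. First, since $A$ is basic, every object of the associated category $\mathcal{A}$ is primitive, so each $\mathcal{A}(x,x)$ is local, the induced preorder $\preccurlyeq$ coincides with the given linear order, and by Proposition~3.1 the standard module attached to an object $x$ is the full regular module $\mathcal{A}(x,x)$ (which is indecomposable because $\mathcal{A}(x,x)$ is local); in particular, once $\mathcal{A}$ is directed, $P_x$ has no composition factor $S_y$ with $y<x$, so $\Delta_x$ is obtained by killing everything off the object $x$ and equals $\mathcal{A}(x,x)$. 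Second, $k$ being algebraically closed, $\End_A(S_i)=k$ for every simple module $S_i$.

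\emph{$(1)\Rightarrow(2)$.} If $A=kQ/I$ with $kQ$ a \emph{finite-dimensional} hereditary algebra, then $Q$ is a finite acyclic quiver, and a topological ordering of its vertices realizes $A$ as a directed category with $\mathcal{A}(x,y)\neq 0$ only when $x\leqslant y$; because $Q$ has no loops, $\mathcal{A}(x,x)=k$ for every object $x$. Then each $\mathcal{A}(x,y)$ is trivially a free $\mathcal{A}(y,y)=k$-module, so Proposition~3.1(2) gives that $\mathcal{A}$ is standardly stratified with respect to $\leqslant$, while Proposition~3.1(1) identifies each standard module with $\mathcal{A}(x,x)\cong k\cong S_x$, which is simple. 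Thus $(2)$ holds.

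\emph{$(2)\Rightarrow(3)$ and $(3)\Rightarrow(1)$.} If every standard module is simple, then a composition series of an arbitrary $A$-module is a filtration by standard modules, so $\mathcal{F}_A(\Delta)=A$-mod; combined with the hypothesis that $A$ is standardly stratified this is $(3)$. Conversely, assume $A$ is standardly stratified with respect to a linear order $\leqslant$ and $\mathcal{F}_A(\Delta)=A$-mod. Every simple $S_i$ then admits a filtration by standard modules; since the $\Delta_j$ are nonzero, such a filtration of a simple module has length one, so $S_i\cong\Delta_j$ for some $j$, and comparing tops ($\Top(\Delta_j)=S_j$) forces $j=i$. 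Hence every $\Delta_i\cong S_i$ is simple, so $\Delta_i$ has only the composition factor $S_i$, and by Proposition~3.2 the associated category $\mathcal{A}$ is directed with respect to $\leqslant$. Now $A$ is standardly stratified and $\dim_k\End_A(\Delta_i)=\dim_k\End_A(S_i)=1$ for all $i$, so $\mathcal{A}$ is quasi-hereditary with respect to $\leqslant$, which by the remark following Proposition~3.2 is equivalent to $A$ being a quotient of a finite-dimensional hereditary algebra.

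Since each step merely invokes an earlier statement, I do not anticipate a genuine obstacle. The only point requiring care is ensuring that the stratification hypotheses in $(2)$ and $(3)$ really place us in the directed-category framework of Propositions~3.1 and~3.2 — concretely, that the linear order used for the standard stratification is the one witnessing that $\mathcal{A}$ is directed, which is exactly what Proposition~3.2 supplies once all standard modules are known to be simple — and keeping in mind that \emph{hereditary} must be read as \emph{finite-dimensional} hereditary: it is the resulting acyclicity of the quiver, in particular the absence of loops, that pins down $\mathcal{A}(x,x)=k$ and hence forces the standard modules to be simple.
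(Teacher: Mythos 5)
Your proposal is correct, and for the implications $(1)\Rightarrow(2)$ and $(2)\Leftrightarrow(3)$ it matches the paper: the paper also deduces $(1)\Rightarrow(2)$ from Proposition~3.1 after noting that every $\mathcal{A}(x,x)\cong k$, and it dismisses $(2)\Leftrightarrow(3)$ as clear (you usefully spell out the two composition-series arguments). The genuine difference is in $(3)\Rightarrow(1)$. The paper observes that $\mathcal{F}_A(\Delta)=A\text{-mod}$ is closed under cokernels of monomorphisms and then invokes Theorem~0.3 of the external reference \cite{Li5}. You instead argue internally: a $\Delta$-filtration of each simple forces $\Delta_i\cong S_i$, Proposition~3.2 then makes $\mathcal{A}$ directed, and one-dimensionality of the endomorphism algebras finishes the job via the remark after Proposition~3.2 (quasi-hereditary iff quotient of hereditary). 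Your route has the advantage of staying inside the paper; its only soft spot is that the remark you cite is itself only asserted as ``straightforward'' there --- but in your situation it is genuinely immediate, since directedness plus $\mathcal{A}(x,x)\cong k$ for all $x$ means the ordinary quiver of $A$ is finite, acyclic and loop-free, so $A$ is a quotient of the finite-dimensional path algebra of that quiver; you could add that one sentence to make the step self-contained. The paper's route, by contrast, buys a stronger external theorem at the cost of a reference outside the present text. Both are valid.
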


\begin{proof}
$(1) \Rightarrow (2)$: If $A$ is a quotient algebra of a finite-dimensional hereditary algebra, then $\mathcal{A}$ is a finite directed category, and the endomorphism algebra of every object is isomorphic to $k$. Clearly, $\mathcal{A}$ is standardly stratified with respect to $\leqslant$ by (1) of Proposition 3.1. Moreover, all standard modules are simple by (2) of Proposition 3.1.

The equivalent of (2) and (3) is clear.

$(3) \Rightarrow (1)$: Since $\mathcal{F} _{A} (\Delta) = A$-mod, it is closed under cokernels of monomorphisms. Then (1) is true by Theorem 0.3 in \cite{Li5}.
\end{proof}

In \cite{Ringel} Ringel shows that for every finite-dimensional algebra $\Lambda$ standardly stratified with respect to a linear order, there is a \textit{characteristic tilting module} $T$, which is a generalized tilting module and is the \textit{Ext-injective} object in the category $\mathcal{F} _{\Lambda} (\Delta)$. That is, for every $M \in \mathcal{F} _{\Lambda} (\Delta)$, $\Ext _{\Lambda}^1 (M, T) = 0$. We end this section by describing explicitly the structure of this characteristic tilting module for some finite directed categories $\mathcal{A}$. Clearly, the indecomposable summands of $T$ can be indexed by $\Ob \mathcal{A}$. That is, for $x \in \Ob \mathcal{A}$ $T_x$ is the indecomposable summand of $T$ satisfying $[T_x : \Delta_x] = 1$ and $[T_x : \Delta_y] = 0$ for all $y \nleqslant x$, and $T = \bigoplus _{x \in \Ob \mathcal{A}} T_x$.

\begin{corollary}
Let $\mathcal{A}$ be a finite directed category such that every object is primitive and the endomorphism algebra is self-injective, and suppose that for all $x, y \in \Ob \mathcal{A}$, $\mathcal{A} (y, x)$ is a right free $\mathcal{A} (y,y)$-module. Then for every $x \in \Ob \mathcal{A}$, $T_x \cong \nabla_x \cong I_x$, where $I_x$ is the indecomposable injective $\mathcal{A}$-module corresponding to $x$.
\end{corollary}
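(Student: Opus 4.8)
The plan is to show that the indecomposable injective $\mathcal{A}$-module $I_x$ plays both roles at once: it is already equal to $\nabla_x$, and it is the indecomposable summand $T_x$ of the characteristic tilting module. Throughout we use that every object being primitive makes $A$ basic and makes the preorder $\preccurlyeq$ coincide with $\leqslant$; and since $T$ is defined, $\mathcal{A}$ is standardly stratified with respect to $\leqslant$, so Proposition 3.4 is available.

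First I would settle $\nabla_x\cong I_x$. Writing $I_x=D(1_xA)$ (the $k$-dual of the indecomposable projective right $A$-module at $x$), the value of $I_x$ on $y$ is $I_x(y)=1_y\cdot D(1_xA)\cong D(1_xA1_y)=D(\mathcal{A}(y,x))$, which is nonzero exactly when $y\leqslant x$. Hence every composition factor $S_j$ of $I_x$ satisfies $j\leqslant x$, i.e.\ $j\preccurlyeq x$, and so the largest submodule of $I_x$ whose composition factors $S_j$ all satisfy $j\preccurlyeq x$ is $I_x$ itself; that is, $\nabla_x\cong I_x$. (This is the remark following Proposition 3.2.)

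The heart of the proof is to show that $I_x(y)$ is a free $\mathcal{A}(y,y)$-module for every $y\in\Ob\mathcal{A}$. Under the right-to-left convention, $\mathcal{A}(y,x)=1_x\mathcal{A}1_y$ is naturally a \emph{right} $\mathcal{A}(y,y)$-module (precomposition with endomorphisms of the source $y$), and the \emph{left} $\mathcal{A}(y,y)$-module structure on $I_x(y)=D(\mathcal{A}(y,x))$ is the $k$-dual of it. By hypothesis $\mathcal{A}(y,x)$ is free, of some rank $r_y$, as a right $\mathcal{A}(y,y)$-module, so $I_x(y)\cong D\big(\mathcal{A}(y,y)_{\mathcal{A}(y,y)}\big)^{\oplus r_y}$ as a left $\mathcal{A}(y,y)$-module. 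Since $\mathcal{A}(y,y)$ is finite-dimensional, local and self-injective, it is weakly symmetric (its Nakayama permutation is trivial, there being a unique simple module), hence $D\big(\mathcal{A}(y,y)_{\mathcal{A}(y,y)}\big)\cong{}_{\mathcal{A}(y,y)}\mathcal{A}(y,y)$ as left modules. Therefore $I_x(y)$ is free over $\mathcal{A}(y,y)$, of rank $r_y=\dim_k\mathcal{A}(y,x)/\dim_k\mathcal{A}(y,y)$; in particular $r_x=1$, and $r_y=0$ whenever $y\nleqslant x$.

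Now I would finish. Since $I_x(y)$ is a free $\mathcal{A}(y,y)$-module for every $y$, Proposition 3.4 gives $I_x\in\mathcal{P}^f(\mathcal{A})=\mathcal{F}_{\mathcal{A}}(\Delta)$; and since each standard module $\Delta_y\cong\mathcal{A}(y,y)$ is supported only on $y$, reading off dimensions along a $\Delta$-filtration of $I_x$ gives $[I_x:\Delta_y]=r_y$, so $[I_x:\Delta_x]=1$ and $[I_x:\Delta_y]=0$ for $y\nleqslant x$. On the other hand $I_x$ is indecomposable and injective, so $\Ext^1_{\mathcal{A}}(M,I_x)=0$ for every $\mathcal{A}$-module $M$, in particular for every $M\in\mathcal{F}_{\mathcal{A}}(\Delta)$; by Ringel's characterization (\cite{Ringel}) the Ext-injective objects of $\mathcal{F}_{\mathcal{A}}(\Delta)$ are precisely the summands of $T$, so $I_x$ is an indecomposable summand of $T$, and the $\Delta$-multiplicities computed above identify it as $T_x$. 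Hence $T_x\cong I_x\cong\nabla_x$. The one step that really uses the hypotheses is the middle one: the assumption is about \emph{right} freeness over the endomorphism ring of the \emph{source}, whereas Proposition 3.4 and the multiplicity count need \emph{left} freeness of the \emph{values} of $I_x$, and it is precisely the self-injectivity of each local algebra $\mathcal{A}(y,y)$ that lets one pass from one to the other by dualizing $\mathcal{A}(y,y)_{\mathcal{A}(y,y)}$ back to ${}_{\mathcal{A}(y,y)}\mathcal{A}(y,y)$.
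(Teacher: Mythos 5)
Your proof is correct and follows essentially the same route as the paper: identify $\nabla_x\cong I_x$, dualize $1_x\mathcal{A}$ to see that each value $I_x(y)=D(\mathcal{A}(y,x))$ is a free $\mathcal{A}(y,y)$-module because the local self-injective algebra $\mathcal{A}(y,y)$ is Frobenius, and then invoke Proposition 3.4 together with the injectivity of $I_x$. Your extra step of counting the $\Delta$-multiplicities to pin down that the summand is precisely $T_x$ is a small point of added care that the paper leaves implicit.
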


\begin{proof}
We already know $\nabla_x \cong I_x$ (see the remark after Proposition 3.2). Also, since clearly $\Ext _{\mathcal{A}} ^i (M, I_x) = 0$ for $i \geqslant 1$ and $M \in \mathcal{F} _{\mathcal{A}} (\Delta)$, it is enough to show $I_x \in \mathcal{F} _{\mathcal{A}} (\Delta)$. Note that $I_x = D (Q_x)$, where $D$ is the functor $\Hom _{\mathcal{A}} (-, k)$, and $Q_x = 1_x \mathcal{A}$ is the space of morphisms ending at $x$. As a right $\mathcal{A}$-module (or equivalently, a left $\mathcal{A} ^{\text{op}}$-module), the value $Q_x (y)$ of $Q_x$ on an arbitrary object $y$ is $\mathcal{A} ^{op} (x, y) = \mathcal{A} (y, x)$, which is a right free $\mathcal{A} (y,y)$-module. Therefore, the value $I_x (y)$ is a left free $D (\mathcal{A} (y, y))$-module. But $\mathcal{A} (y, y)$ is local and self-injective, so it is a Frobenius algebra. Therefore, $I_x (y)$ is actually a left free $\mathcal{A} (y, y)$-module. The conclusion then follows from Proposition 3.4.
\end{proof}

This result is trivially true if $\mathcal{A}$ is quasi-hereditary with respect to the given linear order $\leqslant$. Indeed, in this case $\mathcal{F} _{\mathcal{A}} (\Delta)$ contains all $\mathcal{A}$-modules.

The condition that for all $x, y \in \Ob \mathcal{A}$, $\mathcal{A} (y, x)$ is a right free $\mathcal{A} (y,y)$-module is equivalent to saying that the opposite category $\mathcal{A} ^{\text{op}}$ is standardly stratified with respect to the opposite linear order $\leqslant ^{\text{op}}$. It is also equivalent to saying that the right projective dimension of $\mathcal{A} ^{\text{op}} (x, x)$ as an $\mathcal{A} ^{\text{op}}$-module is finite. This condition cannot be dropped, as explained by the following example. This example also tells us that the associated category of the Ringel dual $\End _{\mathcal{A}} (T) ^{\text{op}}$ might not be a directed category.

\begin{example}
Let $A$ be the following path algebra with relations $\alpha \delta = \delta^2 = 0$.
\begin{equation*}
\xymatrix {x \ar@(ld,lu)[]|{\delta} \ar[r]^{\alpha} & y}
\end{equation*}
It is easy to check that $\mathcal{A}$ is standardly stratified with respect to the order $x < y$. Projective modules, standard modules, and injective modules are as follows:
\begin{equation*}
P_x = \begin{matrix} & x & \\ x & & y \end{matrix} \quad P_y = y \quad \Delta_x = \begin{matrix} x \\ x \end{matrix} \quad \Delta_y = y \quad I_x = \begin{matrix} x \\ x \end{matrix} \quad I_y = \begin{matrix} x \\ y \end{matrix}
\end{equation*}
Clearly, $I_y$ is not contained in $\mathcal{F} _{\mathcal{A}} (\Delta)$. Actually, the characteristic tilting module is $T \cong P_x \oplus I_x$.

Now let us consider $\Gamma = \End _{\mathcal{A}} (T) ^{\text{op}}$. It is isomorphic to the path algebra of the following quiver with relation $\beta \alpha \beta = 0$, whose associated category is not directed.
\begin{equation*}
\xymatrix{1 \ar@/^/[rr]^{\alpha} & & 2 \ar@/^/[ll]^{\beta}.}
\end{equation*}
With respect to the order $1 < 2$ the algebra $\Gamma$ is standardly stratified. Its indecomposable projective modules, injective modules, and standard modules are listed below:
\begin{equation*}
P_1 = \begin{matrix} 1 \\ 2 \\ 1 \\ 2 \end{matrix} \quad P_2 = \begin{matrix} 2 \\ 1 \\ 2 \end{matrix} \quad \Delta_1 = 1 \quad \Delta_2 = \begin{matrix} 2 \\ 1 \\2 \end{matrix} \quad I_1 = \begin{matrix} 1 \\ 2 \\ 1 \end{matrix} \quad I_2 = \begin{matrix} 1 \\ 2 \\ 1 \\2 \end{matrix}
\end{equation*}
The characteristic tilting $\Gamma$-module $T' \cong P_1 \oplus \Delta_1$. The opposite algebra of $\End _{\Gamma} (T')$ is isomorphic to $A$, as claimed by Ringel's duality.
\end{example}

\section{Generalized APR tilting modules}

Our main goal in this section is to prove the existence of generalized APR tilting modules for triangular matrix algebras. Let $A$ be a basic finite-dimensional algebra. In \cite{APR} it is shown that if $A$ has a simple projective module $S$, then $T = Q \oplus \tau ^{-1} S$ is a tilting module, called the \textit{APR tilting module}, where $Q$ is the direct sum of all indecomposable summands of $_AA$ not isomorphic to $S$, and $\tau$ is the Auslander-Reiten translation. In particular, if $A$ is a hereditary algebra, then the functor $\Hom _A (T, -)$ is precisely the BGP reflection functor (\cite{ASS,APR}).

Let $e$ be a primitive idempotent in $A$ with $Ae \cong S$ and $\epsilon = 1 - e$. A simple observation tells us that $A = \begin{bmatrix} \epsilon A \epsilon & 0 \\ eA\epsilon & k \end{bmatrix}$ is a triangular matrix algebra with the following structure (called \textit{one-point trivial extension}):
\begin{equation*}
\xymatrix {\bullet \ar@(ld,lu)[]|{\epsilon A \epsilon} \ar[rr]^{eA \epsilon} & & \bullet}
\end{equation*}
Therefore, we may ask whether a generalized APR tilting module exists if $A$ has a projective module $P_S$ all of whose composition factors are isomorphic to the simple module $S$. In other words, there is a primitive idempotent $e$ in $A$ such that $A \cong \begin{bmatrix} \epsilon A \epsilon & 0 \\ eA\epsilon & eAe \end{bmatrix}$ and $P_S = Ae$. Structure of $A$ can be pictured as below:
\begin{equation*}
\xymatrix {\bullet \ar@(ld,lu)[]|{\epsilon A \epsilon} \ar[rr]^{eA \epsilon} & & \bullet \ar@(rd,ru)[]|{eAe}}.
\end{equation*}

We introduce some notations here. Let $\mathcal{A}$ be the associated category of $A$. Then $\{ 1_x \} _{x \in \Ob \mathcal{A}}$ is a set of primitive orthogonal idempotents in $A$. Let $z$ be the object on which $P_S = Ae$ is supported. That is, $\mathcal{A} 1_z \cong Ae \cong P_S$. Note that $\mathcal{A}$ need not be a directed category. However, we always have $\mathcal{A} (z, x) = 0$ for $z \neq x \in \Ob \mathcal{A}$. Let $\mathcal{B}$ be the full subcategory of $\mathcal{A}$ constituted of all objects $x$ different from $z$. Then the associated algebra of $B$ is exactly $\epsilon A \epsilon$, and $\mathcal{A}$ has the following description.
\begin{equation*}
\xymatrix {\mathcal{B} \ar[rr]^{eA \epsilon} & & z \ar@(rd,ru)[]|{eAe}}
\end{equation*}
Let $A^{\text{o}}$ and $\mathcal{A} ^{\text{o}}$ be the opposite algebra of $A$ and the opposite category of $\mathcal{A}$ respectively. Let $Q = A \epsilon$, which is the direct sum of all other indecomposable summands of $_AA$ not isomorphic to $P_S$. Define $T = Q \oplus \tau ^{-1} P_S$. Now the problem is to check under what conditions $T$ is a tilting module. Since $T$ has $n$ pairwise nonisomorphic indecomposable summands, it suffices to check $\pd _A T \leqslant 1$ and $\Ext _A^1 (T, T) = 0$.

The following lemmas are crucial for the main result of this section.

\begin{lemma}
The projective dimension $\pd_A T \leqslant 1$ if and only if $\Hom _{A^{\text{o}}} (DP_S, A^{\text{o}}) = 0$, where $D = \Hom_k (-, k)$.
\end{lemma}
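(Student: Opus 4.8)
The plan is to compute $\pd_A T$ by treating the two pieces of $T = Q \oplus \tau^{-1}P_S$ separately. The summand $Q = A\epsilon$ is projective, so it contributes nothing; everything reduces to controlling $\pd_A \tau^{-1} P_S$. Since $P_S = Ae$ is projective, the Auslander--Reiten formula gives a concrete handle on $\tau^{-1} P_S$: recall that $\tau^{-1} N$ is obtained from a minimal injective copresentation of $\Tr D$ applied appropriately, but the cleanest route here is the standard description $\tau^{-1} P_S \cong \nu^{-1}(\text{stuff})$ is unavailable since $P_S$ is projective, so instead I would use that for a projective module $P_S$ the cokernel description $\tau^{-1} P_S = \operatorname{coker}\bigl(\nu^{-1} f\bigr)$ coming from a minimal projective presentation of the transpose. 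Concretely: take a minimal projective presentation $P_1 \xrightarrow{d} P_0 \to \operatorname{Tr} DP_S \to 0$; then $\tau^{-1}P_S$ sits in $0 \to P_S \to \nu^{-1}P_0 \to \nu^{-1}P_1$ where $\nu$ is the Nakayama functor. The upshot I want is the short exact sequence governing $\tau^{-1}P_S$, and then apply $\Hom_A(-, A)$ to read off when $\pd \le 1$.

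The key technical step is to reinterpret the vanishing of a suitable $\Ext^1$ or $\Hom$ in terms of the opposite algebra. Here I would use the duality $D = \Hom_k(-,k): A\text{-mod} \to A^{\mathrm{o}}\text{-mod}$, which sends projectives to injectives and is exact. Applying $D$ to a minimal projective resolution $P_1 \to P_0 \to \tau^{-1}P_S \to 0$ (note $\tau^{-1}P_S$ has projective dimension at most the relevant bound precisely because its minimal presentation has length controlled by the AR sequence), the condition $\pd_A \tau^{-1}P_S \le 1$ translates, via $D$ and the identification of $DP_S$ as an injective $A^{\mathrm{o}}$-module — in fact an indecomposable injective corresponding to the vertex $z$ — into a statement about $\Hom_{A^{\mathrm{o}}}$. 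More precisely: $\pd_A \tau^{-1}P_S \le 1$ iff $\operatorname{id}_{A^{\mathrm{o}}} D(\tau^{-1}P_S) \le 1$; and since $D(\tau^{-1}P_S) \cong \tau_{A^{\mathrm{o}}} DP_S$ with $DP_S$ injective, the needed vanishing is exactly that the kernel term in the AR-theoretic exact sequence for $\tau_{A^{\mathrm{o}}} DP_S$ acquires no extra projective-resolution length, which after unwinding Auslander--Reiten duality says $\Hom_{A^{\mathrm{o}}}(DP_S, A^{\mathrm{o}}) = 0$.

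Let me spell out the cleaner equivalent I would actually run. There is a classical fact: for a module $X$ over a finite-dimensional algebra, $\pd_A X \le 1$ iff $\Ext^2_A(X, A) = 0$ together with... no — the sharp tool here is that $\tau^{-1}P_S$ has $\pd \le 1$ iff the AR sequence $0 \to P_S \to E \to \tau^{-1}P_S \to 0$ (valid since $P_S$ is indecomposable projective noninjective, or if it is injective the statement is trivial because then $\tau^{-1}P_S = 0$ and $\pd = 0$) has $E$ and $P_S$ both projective — but $P_S$ is projective, so $\pd_A \tau^{-1}P_S \le 1$ iff $\pd_A E \le 1$, and $E$ is the middle of the AR sequence ending at the injective envelope data. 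I would then dualize: $D$ carries this AR sequence to the AR sequence $0 \to D(\tau^{-1}P_S) \to DE \to DP_S \to 0$ over $A^{\mathrm{o}}$, with $DP_S$ injective. The projective dimension condition on $\tau^{-1}P_S$ over $A$ matches the injective dimension condition on $DP_S$'s $\tau$-translate over $A^{\mathrm{o}}$, and applying $\Hom_{A^{\mathrm{o}}}(DP_S, -)$ to the minimal injective copresentation of $A^{\mathrm{o}}$, the obstruction is precisely $\Hom_{A^{\mathrm{o}}}(DP_S, A^{\mathrm{o}})$ (here $A^{\mathrm{o}}$ embeds in its injective envelope and $DP_S$ being injective means any nonzero map to $A^{\mathrm{o}}$ detects a direct summand $DP_S$ of $A^{\mathrm{o}}$, i.e.\ a projective-injective summand, which is exactly what fails to happen when $\pd_A \tau^{-1}P_S \le 1$).

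The main obstacle I anticipate is bookkeeping the various functorial identifications correctly — in particular keeping straight that $D$ swaps projective/injective and $\pd$/$\id$, that it intertwines $\tau_A$ and $\tau_{A^{\mathrm{o}}}$ up to the expected twist, and handling the degenerate case where $P_S$ happens to be injective (where $\tau^{-1}P_S = 0$, both sides of the claimed equivalence hold trivially since $DP_S$ is then projective-injective over $A^{\mathrm{o}}$ and $\Hom_{A^{\mathrm{o}}}(DP_S, A^{\mathrm{o}}) \ne 0$ would seem to contradict the claim — so I should double-check the exact hypothesis: $P_S$ has all composition factors $\cong S$ with $S$ not necessarily satisfying $P_S$ injective, and in the genuinely interesting case $\mathcal{B} \ne \emptyset$ this degeneracy does not arise). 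Modulo that care, the argument is a direct dualization once the AR sequence for $P_S$ is written down.
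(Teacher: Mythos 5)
Your opening move is the right one and matches the paper: from a projective presentation $P^1 \rightarrow P^0 \rightarrow DP_S \rightarrow 0$ of left $A^{\text{o}}$-modules, applying $\Hom_{A^{\text{o}}}(-,A^{\text{o}})$ gives the four-term exact sequence
$0 \rightarrow \Hom_{A^{\text{o}}}(DP_S,A^{\text{o}}) \rightarrow \Hom_{A^{\text{o}}}(P^0,A^{\text{o}}) \rightarrow \Hom_{A^{\text{o}}}(P^1,A^{\text{o}}) \rightarrow \tau^{-1}P_S \rightarrow 0$,
and the implication ``$\Hom_{A^{\text{o}}}(DP_S,A^{\text{o}})=0 \Rightarrow \pd_A T \leqslant 1$'' falls out immediately. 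The gap is in the converse, which is where the entire content of the lemma lies. Your proposed mechanism --- that a nonzero element of $\Hom_{A^{\text{o}}}(DP_S,A^{\text{o}})$ would ``detect a direct summand $DP_S$ of $A^{\text{o}}$'' because $DP_S$ is injective --- is false: a nonzero map \emph{out of} an injective module need not be a split monomorphism (its image is a quotient of $DP_S$, not a copy of it), so no projective-injective summand of $A^{\text{o}}$ is produced. The AR-sequence detour ($\pd \tau^{-1}P_S \leqslant 1$ iff $\pd E \leqslant 1$) is true but leads nowhere toward the stated $\Hom$-vanishing.

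What is actually needed, and what your sketch never uses, are the structural hypotheses on $A$: that $P_S=Ae$ is supported only at $z$ (i.e.\ $\epsilon A e=0$, so all composition factors of $P_S$ are $S$), that $eAe$ is local (self-injective in the ambient theorem), and that $\mathcal{A}$ is connected. The paper's converse runs as follows: if $\pd_A\tau^{-1}P_S\leqslant 1$, the four-term sequence forces $N^{\ast}:=\Hom_{A^{\text{o}}}(DP_S,A^{\text{o}})$ to be \emph{projective}; since $DP_S$ is supported only at $z$, $N^{\ast}\cong\Hom_{A^{\text{o}}}(DP_S,A^{\text{o}}e)$, which embeds as a \emph{proper} submodule of $\Hom_{A^{\text{o}}}(A^{\text{o}}e,A^{\text{o}}e)\cong eAe=Ae=P_S$ (properness uses the exact sequence $0\rightarrow M\rightarrow A^{\text{o}}e\rightarrow DP_S\rightarrow 0$ with $M\neq 0$, which is exactly where connectivity enters); and a proper submodule of $P_S$ cannot be a nonzero projective, because any nonzero projective with all composition factors $S$ contains a copy of $P_S$, contradicting the dimension count. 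Your own worry about the degenerate case ($P_S$ injective, $\tau^{-1}P_S=0$, yet $\Hom_{A^{\text{o}}}(DP_S,A^{\text{o}})\neq 0$) is precisely the counterexample showing the converse is not a formal AR-theoretic fact; it must be excluded by these hypotheses, not waved away. As written, the proposal proves only the easy direction.
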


\begin{proof}
Since by our construction $T$ is the direct sum of $\tau ^{-1} P_S$ and some projective modules, $\pd_A T \leqslant 1$ if and only if $\pd_A \tau ^{-1} P_S \leqslant 1$. Take a projective presentation $P^1 \rightarrow P^0 \rightarrow DP_S \rightarrow 0$, where all modules are left $A^{\text{o}}$-modules (or right $A$-modules). This presentation gives rise to the following exact sequence
\begin{equation*}
(\ast): 0 \rightarrow \Hom _{A^{\text{o}}} (DP_S, A^{\text{o}}) \rightarrow \Hom_{A^{\text{o}}} (P^0, A^{\text{o}}) \rightarrow \Hom_{A^{\text{o}}} (P^1, A^{\text{o}}) \rightarrow \tau^{-1} P_S \rightarrow 0.
\end{equation*}
Note that the second term and the third term are projective $A$-modules. Therefore, it is easy to see that $\Hom _{A^{\text{o}}} (DP_S, A^{\text{o}}) = 0$ implies $\pd_A T \leqslant 1$.

Conversely, if $\pd_A T \leqslant 1$, from the above exact sequence we conclude that $\Hom _{A^{\text{o}}} (DP_S, A^{\text{o}})$ is a projective module. Note that $DP_S = D(eAe)$ is a module only supported on $z$, the first object of $\mathcal{A} ^{\text{o}}$ having the following structure:
\begin{equation*}
\xymatrix {\mathcal{A} ^{\text{o}}: & z \ar@(ld,lu)[]| {(eAe) ^{\text{o}}} \ar[rr] & & \mathcal{B} ^{\text{o}}},
\end{equation*}
$A^{\text{o}} \cong A^{\text{o}}e \oplus B^{\text{o}}$ as left modules, and $\Hom _{A^{\text{o}}} (DP_S, B^{\text{o}}) = 0$, so $\Hom _{A^{\text{o}}} (DP_S, A^{\text{o}}) \cong \Hom _{A^{\text{o}}} (DP_S, A^{\text{o}} e)$.

From our construction, $DP_S \cong (eAe)^{\text{o}}$ has the following short exact sequence:
\begin{equation*}
\xymatrix{0 \ar[r] & M \ar[r] & A^{\text{o}}e \ar[r] & DP_S \ar[r] & 0}
\end{equation*}
with $M \neq 0$. Applying the functor $\Hom _{A^{\text{o}}} (-, A^{\text{o}} e)$ we get:
\begin{align*}
& 0 \rightarrow \Hom _{A^{\text{o}}} (DP_S, A^{\text{o}} e) \rightarrow \Hom _{A^{\text{o}}} (A^{\text{o}}e, A^{\text{o}} e)\\
& \rightarrow \Hom _{A^{\text{o}}} (M, A^{\text{o}} e) \rightarrow \Ext^1 _{A^{\text{o}}} (DP_S, A^{\text{o}}e) \rightarrow 0.
\end{align*}
But $DP_S$ is actually an injective $A^{\text{o}}$-module because $eAe$ is self-injective, so the extension group $\Ext^1 _{A^{\text{o}}} (DP_S, A^{\text{o}}e) = 0$. Clearly, $\Hom _{A^{\text{o}}} (M, A^{\text{o}} e) \neq 0$. Therefore, $\Hom _{A^{\text{o}}} (DP_S, A^{\text{o}}) \cong \Hom _{A^{\text{o}}} (DP_S, A^{\text{o}} e)$ is a proper submodule of the indecomposable projective $A$-module $\Hom _{A^{\text{o}}} (A^{\text{o}}e, A^{\text{o}} e) \cong eAe = Ae$. From the structure of $A$ we conclude that the only possibility for $\Hom _{A^{\text{o}}} (DP_S, A^{\text{o}})$ to be projective is that it is actually 0. This finishes the proof.
\end{proof}

\begin{lemma}
The condition $\Hom _{A^{\text{o}}} (DP_S, A^{\text{o}}) = 0$ holds if there is some $x \in \Ob \mathcal{A}$ such that $\mathcal{A} (x, z)$ as a left $\mathcal{A} (z, z)$-module has a free summand.
\end{lemma}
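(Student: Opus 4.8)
The plan is to feed the hypothesis into the exact sequence already built in the proof of the previous lemma. There we produced, for left $A^{\text{o}}$-modules, a decomposition $A^{\text{o}}\cong A^{\text{o}}e\oplus B^{\text{o}}$ with $\Hom_{A^{\text{o}}}(DP_S,B^{\text{o}})=0$, hence $\Hom_{A^{\text{o}}}(DP_S,A^{\text{o}})\cong\Hom_{A^{\text{o}}}(DP_S,A^{\text{o}}e)$, together with a short exact sequence $0\to M\to A^{\text{o}}e\to DP_S\to 0$ in which $DP_S$ is injective, so that $\Ext^1_{A^{\text{o}}}(DP_S,A^{\text{o}}e)=0$. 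Applying $\Hom_{A^{\text{o}}}(-,A^{\text{o}}e)$ to that sequence identifies $\Hom_{A^{\text{o}}}(DP_S,A^{\text{o}}e)$ with the set of endomorphisms of $A^{\text{o}}e$ that vanish on $M$. So it suffices to prove: under the stated hypothesis, every endomorphism of $A^{\text{o}}e$ killing $M$ is zero.

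Next I would rewrite the relevant objects in the language of $\mathcal{A}$. As a right $A$-module, $A^{\text{o}}e=eA=1_zA=\bigoplus_{x\in\Ob\mathcal{A}}1_zA1_x=\bigoplus_{x}\mathcal{A}(x,z)$, and any right-$A$-linear endomorphism of $1_zA$ is left multiplication by a unique element of $1_zA1_z=\mathcal{A}(z,z)$; thus $\End_{A^{\text{o}}}(A^{\text{o}}e)\cong\mathcal{A}(z,z)$, acting by left multiplication. For $M$ I only need the inclusion $\mathcal{A}(x,z)\subseteq M$ for every $z\neq x\in\Ob\mathcal{A}$: since $DP_S$ is supported on the single object $z$, we have $DP_S\cdot 1_x=0$ for $x\neq z$, and hence the right-$A$-linear surjection $1_zA\to DP_S$ annihilates $1_zA1_x=\mathcal{A}(x,z)$, because $\mathcal{A}(x,z)=\mathcal{A}(x,z)\cdot 1_x$. (A short dimension count in fact gives $M=\bigoplus_{z\neq x}\mathcal{A}(x,z)$, but this is not needed.)

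The conclusion is then immediate. By hypothesis there is an object $x$, necessarily with $x\neq z$, such that $\mathcal{A}(x,z)$ has a nonzero free left $\mathcal{A}(z,z)$-summand $F\cong\mathcal{A}(z,z)^{\oplus r}$ with $r\geq 1$. Suppose $a\in\mathcal{A}(z,z)$ gives an endomorphism of $1_zA$ that vanishes on $M$. Since $F\subseteq\mathcal{A}(x,z)\subseteq M$, left multiplication by $a$ kills $F$; as $F$ is free over $\mathcal{A}(z,z)$ this forces $a=0$. Therefore $\Hom_{A^{\text{o}}}(DP_S,A^{\text{o}}e)=0$, and consequently $\Hom_{A^{\text{o}}}(DP_S,A^{\text{o}})=0$.

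There is no real difficulty beyond bookkeeping; the one point to watch is the sidedness through the passages to $A^{\text{o}}$ and $D$, and in particular that the free-module hypothesis must be, and is, used precisely for the left $\mathcal{A}(z,z)$-action on $\mathcal{A}(x,z)=1_zA1_x$ — the same action by which $\End_{A^{\text{o}}}(A^{\text{o}}e)$ operates on $A^{\text{o}}e$. One could streamline the write-up by never identifying $M$ explicitly and invoking only the inclusion $\mathcal{A}(x,z)\subseteq M$ observed above.
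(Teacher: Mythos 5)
Your argument is correct, and the decisive mechanism is the same one the paper uses: a free left $\mathcal{A}(z,z)$-summand of $\mathcal{A}(x,z)=1_z A 1_x$ makes left multiplication by any nonzero element of $\mathcal{A}(z,z)=eAe$ act nontrivially on $\mathcal{A}(x,z)$. The packaging, however, is different. After the same reduction to $\Hom_{A^{\text{o}}}(DP_S, A^{\text{o}}1_z)$, the paper does not pass through the presentation $0\to M\to A^{\text{o}}e\to DP_S\to 0$ at all: it takes an arbitrary $\varphi\in\Hom_{\mathcal{A}^{\text{o}}}(DP_S,\mathcal{A}^{\text{o}}1_z)$, evaluates at the two objects $z$ and $x$, and uses commutativity of the resulting square together with injectivity of the structure map $(eAe)^{\text{o}}\to\mathcal{A}(x,z)$ (this is where the free summand enters) to force $\varphi_z=0$. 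You instead identify $\Hom_{A^{\text{o}}}(DP_S,A^{\text{o}}e)$ with the endomorphisms of $A^{\text{o}}e$ --- left multiplications by elements of $eAe$ --- that kill $M$, and use only the inclusion $1_zA1_x\subseteq M$. Both are sound, and yours is arguably tidier: as you note, only left-exactness of $\Hom$ is needed, so the $\Ext^1$-vanishing you recall from the previous proof is superfluous. The one extra input your route relies on is the existence of the surjection $A^{\text{o}}e\to DP_S$, i.e., that $D(eAe)$ is cyclic as a right $eAe$-module; this is available here because $eAe$ is local and self-injective (hence Frobenius) in the theorem's setting, and the paper's proof of the previous lemma already invokes it, but the paper's direct diagram chase for the present lemma does not need it and works for any right module supported only on $z$.
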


\begin{proof}
Note that $DP_S$ as a left $\mathcal{A} ^{\text{o}}$-module is only supported on $z$, which is a minimal object in $\Ob \mathcal{A} ^{\text{o}}$. Therefore, for all $z \neq x \in \Ob \mathcal{A} ^{o}$,
\begin{equation*}
\Hom _{A^{\text{o}}} (DP_S, A^{\text{o}} 1_x) = \Hom _{\mathcal{A} ^{\text{o}}} (DP_S, \mathcal{A} ^{\text{o}} 1_x) = 0
\end{equation*}
since $\mathcal{A} ^{\text{o}} 1_x$ is supported on objects different from $z$. Therefore, $\Hom _{A^{\text{o}}} (DP_S, A^{\text{o}}) = 0$ if and only if $\Hom _{\mathcal{A} ^{\text{o}}} (DP_S, \mathcal{A} ^{\text{o}} 1_z) = 0$.

Let $z \neq x \in \Ob \mathcal{A} ^{\text{o}}$ be an object such that $\mathcal{A} (x, z)$ has a free summand as a left $\mathcal{A} (z, z)$-module. Take $\varphi \in \Hom _{\mathcal{A} ^{\text{o}}} (DP_S, \mathcal{A} ^{\text{o}} 1_z)$. The homomorphism $\varphi$ gives the following diagram by considering the values on $x$ and $z$:
\begin{equation*}
\xymatrix{ DP_S(z) = D(eAe) \ar[rr] \ar[d]^{\varphi_z} & & DP_S(x) = 0 \ar[d]^{\varphi_x} \\
\mathcal{A} ^{\text{o}} 1_z (z) = (eAe) ^{\text{o}} \ar[rr] ^{\rho} & & \mathcal{A} ^{\text{o}} 1_z (x) = \mathcal{A} (x, z).}
\end{equation*}

Note the given condition tells us that $\mathcal{A} ^{\text{o}} (z, x) = \mathcal{A} (x, z)$ has a free summand as a right $\mathcal{A} ^{\text{o}} (z, z)$-module, where the right action of $\mathcal{A} ^{\text{o}} (z, z)$ on $\mathcal{A} ^{\text{o}} (z, x) = \mathcal{A} (x, z)$ is defined as follow: for $\delta \in \mathcal{A} ^{\text{o}} (z, z)$ and $\alpha \in \mathcal{A} ^{\text{o}} (z, x)$, $\alpha \ast \delta = \delta \alpha$. Write $\mathcal{A} ^{\text{o}} (z, x) = \bigoplus _{i \in [s]} M_i$ as right $\mathcal{A} ^{\text{o}} (z, z)$-modules and without loss of generality suppose that $M_1$ is a right free summand. Therefore, the map $\rho$ is determined by $s$ morphisms $\alpha_1, \ldots, \alpha_s$ in $\mathcal{A} ^{\text{o}} (z, z)$ such that for every $\delta \in \mathcal{A} ^{\text{o}} (z, z)$,
\begin{equation*}
\rho (\delta) = \sum _{i \in [s]} \alpha_i \ast \delta = \sum _{i \in [s]} \delta \alpha_i
\end{equation*}
with $\delta \alpha_i \in M_i \subseteq \mathcal{A} ^{\text{o}} (z, x)$. In particular, since $M_1$ is a right free $\mathcal{A} ^{\text{o}} (z, z)$-module, $\alpha_1$ induces a bijection between $\mathcal{A} ^{\text{o}} (z, z)$ and $M_1$. Therefore, for $0 \neq \delta \in \mathcal{A} ^{\text{o}} (z, z)$, $\delta \alpha_1 \neq 0$, so $\rho (\delta) \neq 0$, and hence $\rho$ is injective. Consequently, from the above diagram we conclude $\varphi_z = 0$, so $\varphi = 0$. This finishes the proof.
\end{proof}

Now we are ready to prove the main result.

\begin{theorem}
Notation as before. Suppose that $\mathcal{A} (z, z) = eAe$ is a self-injective algebra, and there is some $z \neq x \in \Ob \mathcal{A}$ such that $\mathcal{A} (x, z)$ has a free summand as a left $\mathcal{A} (z,z)$-module. Then $T$ is a tilting module.
\end{theorem}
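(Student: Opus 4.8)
The goal is to verify the two conditions recorded just before the statement --- $\pd_A T \leqslant 1$ and $\Ext_A^1(T,T) = 0$ --- after which $T$ is a partial tilting module with exactly $n$ pairwise non-isomorphic indecomposable summands, and Bongartz's completion lemma (cf.\ \cite{ASS}) forces it to be a tilting module. So the whole argument is: projective dimension, self-extensions, and the bookkeeping of summands.

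First I would dispose of the projective dimension. By hypothesis $\mathcal{A}(x,z)$ has a free left $\mathcal{A}(z,z)$-summand for some $x \neq z$, so Lemma~4.2 gives $\Hom_{A^{\text{o}}}(DP_S, A^{\text{o}}) = 0$, and then $\pd_A T \leqslant 1$ by Lemma~4.1. Feeding this vanishing into the sequence $(\ast)$ from the proof of Lemma~4.1 (whose first term now disappears), and using $\Hom_{A^{\text{o}}}(A^{\text{o}}1_z, A^{\text{o}}) \cong A1_z = P_S$ exactly as there, I obtain a short exact sequence $0 \to P_S \to E \to \tau^{-1}P_S \to 0$ in which $E = \Hom_{A^{\text{o}}}(P^1, A^{\text{o}})$ is projective, where $P^1 \to A^{\text{o}}1_z \to DP_S \to 0$ is a minimal projective presentation of $DP_S$ over $A^{\text{o}}$. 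I would also record that $P_S$ is not injective: the hypothesis forces $\mathcal{A}(x,z) \neq 0$, so the projective cover $A^{\text{o}}1_z$ of $DP_S$ has dimension $\sum_y \dim_k \mathcal{A}(y,z) > \dim_k \mathcal{A}(z,z) = \dim_k DP_S$ (since $P_S$ is supported only on $z$), whence $DP_S$ is a proper quotient of an indecomposable projective $A^{\text{o}}$-module, hence not projective, hence $P_S$ is not injective. Therefore $\tau^{-1}P_S$ is a nonzero indecomposable non-projective module with $\tau\tau^{-1}P_S \cong P_S$, and $T = Q \oplus \tau^{-1}P_S$ has exactly $n$ pairwise non-isomorphic indecomposable summands.

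For the self-extensions, $\Ext^1_A(Q,-) = 0$ as $Q$ is projective, so only $\Ext^1_A(\tau^{-1}P_S, Q)$ and $\Ext^1_A(\tau^{-1}P_S, \tau^{-1}P_S)$ remain. I would treat both via the Auslander--Reiten formula $\Ext^1_A(\tau^{-1}P_S, Y) \cong D\,\overline{\Hom}_A(Y, \tau\tau^{-1}P_S) = D\,\overline{\Hom}_A(Y, P_S)$, with $\overline{\Hom}$ denoting $\Hom$ modulo morphisms through injective modules (cf.\ \cite{ARS}). Since $P_S$ is supported only on $z$ while each indecomposable summand of $Q$ is some $P_y$ with $y \neq z$, we get $\Hom_A(Q, P_S) = \bigoplus_{y \neq z} P_S(y) = 0$ at once. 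For $Y = \tau^{-1}P_S$ it suffices to show $\Hom_A(\tau^{-1}P_S, P_S) = 0$; applying $\Hom_A(-,P_S)$ to the sequence above embeds this group into $\Hom_A(E, P_S)$, so it is enough to prove that $E$ has no indecomposable summand isomorphic to $P_z = P_S$. The multiplicity of $P_z$ in $E = \Hom_{A^{\text{o}}}(P^1, A^{\text{o}})$ equals the multiplicity of $A^{\text{o}}1_z$ in $P^1$, namely $\dim_k \Ext^1_{A^{\text{o}}}(DP_S, DS_z) = \dim_k \Ext^1_A(S_z, P_S)$; and this vanishes because $S_z$ and all its $A$-syzygies are supported only on $z$ (the projective cover of any $A$-module supported only on $z$ is a sum of copies of $P_z = P_S$), so $\Ext^i_A(S_z, P_S) \cong \Ext^i_{\mathcal{A}(z,z)}(S_z, \mathcal{A}(z,z)) = 0$ for $i \geqslant 1$ by self-injectivity of $\mathcal{A}(z,z)$. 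Hence both $\overline{\Hom}$-groups vanish and $\Ext^1_A(T,T) = 0$.

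The step I expect to be the real obstacle is this last one: correctly identifying the minimal projective presentation of $\tau^{-1}P_S$ as the $\Hom_{A^{\text{o}}}(-,A^{\text{o}})$-dual of that of $DP_S$, and showing its middle term $E$ omits the summand $P_z$; this is the only place where self-injectivity of $\mathcal{A}(z,z)$ is genuinely needed (through $\Ext^i_A(S_z,P_S)=0$ for $i \geqslant 1$), whereas the free-summand hypothesis is used only to secure $\pd_A T \leqslant 1$ via Lemmas~4.1--4.2 and the non-injectivity of $P_S$. Granting the two conditions, $T$ is a partial tilting module with $n$ pairwise non-isomorphic indecomposable summands, so Bongartz's lemma completes it to a tilting module $T'$; since a tilting module of projective dimension $\leqslant 1$ has exactly $n$ such summands, $\operatorname{add}T = \operatorname{add}T'$, and $T$ is a tilting module.
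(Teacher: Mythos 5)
Your proof is correct and follows essentially the same route as the paper: Lemmas 4.1--4.2 give $\pd_A T \leqslant 1$, the Auslander--Reiten formula reduces $\Ext^1_A(T,T)=0$ to $\Hom_A(Q\oplus\tau^{-1}P_S,\,P_S)=0$, and the $\Hom_{A^{\text{o}}}(-,A^{\text{o}})$-dual of the minimal presentation of $DP_S$ shows the middle term of the resolution of $\tau^{-1}P_S$ has no summand isomorphic to $P_z$. The only local difference is that you rule out the summand $A^{\text{o}}1_z$ of $P^1$ via the multiplicity computation $\Ext^1_A(S_z,P_S)=0$, whereas the paper observes directly that $P^0(z)\cong DP_S(z)$ so that $\Omega DP_S$ vanishes at $z$ (both hinge on self-injectivity of $eAe$ at exactly this point); your explicit verification that $P_S$ is non-injective, needed for $\tau\tau^{-1}P_S\cong P_S$ and the count of summands, is a worthwhile detail the paper leaves implicit.
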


\begin{proof}
Under the given assumptions, we have shown that $\pd_A T \leqslant 1$, so it suffices to show $\Ext _A^1 (T, T) = 0$. We have:
\begin{align*}
\Ext_A^1 (T, T) & = \Ext_A^1 (Q \oplus \tau ^{-1} P_S, Q \oplus \tau ^{-1} P_S) = \Ext_A^1 (\tau ^{-1} P_S, Q \oplus \tau ^{-1} P_S) \\
& \cong D \overline{\Hom} _A (Q \oplus \tau ^{-1} P_S, P_S),
\end{align*}
where the last isomorphism follows from the Auslander-Reiten formula (see \cite{ARS}), and $\overline{\Hom} _A (Q \oplus \tau^{-1} P_S, P_S)$ is the quotient space of $\Hom _A (Q \oplus \tau^{-1} P_S, P_S)$ modulo all homomorphisms factoring through injective $A$-modules. Therefore, it is sufficient to show $\Hom _A (Q \oplus \tau^{-1} P_S, P_S) = 0$. Observing the structure of $\mathcal{A}$ we conclude $\Hom _A (Q, P_S) = \Hom _A (A \epsilon, Ae) \cong \epsilon A e = 0$. Therefore, it is enough to show $\Hom _A (\tau^{-1} P_S, P_S) = 0$.

In the proof of Lemma 4.1 we have actually constructed a projective resolution for $\tau ^{-1} P_S$:
\begin{equation*}
(\ast): 0 \rightarrow \Hom_{A^{\text{o}}} (P^0, A^{\text{o}}) \rightarrow \Hom_{A^{\text{o}}} (P^1, A^{\text{o}}) \rightarrow \tau^{-1} P_S \rightarrow 0.
\end{equation*}
Let $Q' = \Hom_{A^{\text{o}}} (P^1, A^{\text{o}})$. Thus the conclusion will follow if $\Hom_A (Q', P_S) = 0$.

Since $P_S$ is only supported on $z$, so is $DP_S$. Moreover, $DP_S (z) = D (eAe) \cong (eAe) ^{\text{o}}$ since $eAe$ is local and self-injective. Therefore, $P^0 \cong eA = 1_z \mathcal{A}$ and $P^0 (z) \cong DP_S (z)$, so the first syzygy $\Omega DP_S$ and hence $P^1$ are supported on objects different from $z$. We can write $P^1 \cong \bigoplus _{z \neq x \in \Ob \mathcal{A}} (1_x A) ^{n_x}$, $n_x \geqslant 0$. Consequently,
\begin{equation*}
Q' = \Hom_{A^{\text{o}}} (P^1, A^{\text{o}}) \cong \Hom_{A ^{\text{o}}} \Big{(} \bigoplus _{z \neq x \in \Ob \mathcal{A}} (1_x A) ^{n_x}, A^{\text{o}} \Big{)} \cong \bigoplus _{z \neq x \in \Ob \mathcal{A}} (A 1_x) ^{n_x}
\end{equation*}
is a direct sum of summands of $Q$. But we have shown $\Hom _A (Q, P_S) = 0$, so  $\Hom _A (Q', P_S) = 0$. This finishes the proof.
\end{proof}

In the proof of this theorem we have shown $\Hom _A (\tau^{-1} P_S, P_S) = 0$. Therefore, $\Hom _A (\tau^{-1} P_S, S) = 0$. Indeed, since by the assumption $P_S$ has only composition factors isomorphism to $S$, in particular its socle contains a simple summand isomorphic to $S$ and there is an inclusion $S \rightarrow P_S$. If $\Hom _A (\tau^{-1} P_S, S) \neq 0$, then $\Hom _A (\tau^{-1} P_S, P_S) \neq 0$ either. This is impossible. Consequently, for any $\mathcal{A}$-module $M$ which is only supported on $z$, or equivalently, which only has composition factors isomorphic to $S$, we have $\Hom_A (\tau ^{-1} P_S, M) = 0$.

The generalized APR tilting module $T$ induces a torsion theory $(\mathcal{T}, \mathcal{F})$, where $\mathcal{T}$ constitutes of all quotient modules of $T^s$ for some $s \geqslant 0$, and $\mathcal{F}$ is formed by all $A$-modules $M$ such that $\Hom_A (T, M) = 0$. We have:

\begin{corollary}
The category $\mathcal{F}$ constitutes of all $A$-modules $M$ all of whose composition factors are isomorphic to $S$.
\end{corollary}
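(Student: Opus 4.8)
The plan is to unwind the definition $\mathcal{F} = \{\, M \mid \Hom_A(T, M) = 0 \,\}$. Since $T = Q \oplus \tau^{-1}P_S$, the condition $\Hom_A(T,M) = 0$ is equivalent to the conjunction of $\Hom_A(Q, M) = 0$ and $\Hom_A(\tau^{-1}P_S, M) = 0$. The first of these is purely formal: because $Q = A\epsilon$ with $\epsilon = \sum_{z \neq x \in \Ob \mathcal{A}} 1_x$, the evaluation map $f \mapsto f(\epsilon)$ gives $\Hom_A(Q, M) \cong \epsilon M = \bigoplus_{z \neq x \in \Ob \mathcal{A}} M(x)$. Hence $\Hom_A(Q,M) = 0$ precisely when $M$ is supported only on $z$. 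Now $A$ is basic and $\mathcal{A}(z,z) = eAe$ is local, so $S$ is the unique simple $A$-module supported on $z$; since $M$ has nonzero value on an object $x$ exactly when it has a composition factor supported on $x$, the condition ``$M$ is supported only on $z$'' is equivalent to ``every composition factor of $M$ is isomorphic to $S$''.

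First I would prove that every $M$ all of whose composition factors are isomorphic to $S$ lies in $\mathcal{F}$. By the paragraph above such an $M$ is supported only on $z$, so $\Hom_A(Q,M) = 0$; and $\Hom_A(\tau^{-1}P_S, M) = 0$ is exactly the assertion of the discussion following Theorem~4.3 (valid for any $\mathcal{A}$-module supported only on $z$). Hence $\Hom_A(T,M) = 0$ and $M \in \mathcal{F}$. Conversely, if $M \in \mathcal{F}$ then in particular $\Hom_A(Q,M) = 0$, so by the first paragraph $M$ is supported only on $z$, i.e.\ all of its composition factors are isomorphic to $S$. Combining the two inclusions yields the claim.

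The only input that is not a formal manipulation is the vanishing $\Hom_A(\tau^{-1}P_S, M) = 0$ for $M$ supported on $z$, and this is already available: the discussion after Theorem~4.3 deduces it from $\Hom_A(\tau^{-1}P_S, P_S) = 0$ (established inside the proof of the theorem) by using the inclusion $S \hookrightarrow P_S$ into the socle and induction on the composition length of $M$. So I do not expect a genuine obstacle here; the corollary is a direct consequence of the definition of $\mathcal{F}$ together with that already-proved vanishing.
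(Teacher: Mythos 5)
Your proposal is correct and follows essentially the same route as the paper: both directions reduce to the identification $\Hom_A(Q,M)\cong \epsilon M$, and the only substantive input, $\Hom_A(\tau^{-1}P_S,M)=0$ for $M$ supported on $z$, is exactly the vanishing the paper records in the discussion following Theorem 4.3. Your write-up merely makes explicit the support/composition-factor translation and the induction on composition length that the paper leaves implicit.
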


\begin{proof}
Suppose that $M$ only has composition factors isomorphic to $S$. Clearly, $\Hom_A (Q, M) = 0$. But we also have $\Hom_A (\tau ^{-1} P_S, M) = 0$, so $\Hom_A (T, M) = 0$, and $M$ is contained in $\mathcal{F}$. Conversely, for every $A$-module $X$ having a composition factor $T$ not isomorphic to $S$, there exists a summand $Q_T$ of $Q$ such that $\Hom_A (Q_T, X) \neq 0$, so $X$ is not contained in $\mathcal{F}$.
\end{proof}

The torsion theory $(\mathcal{T}, \mathcal{F})$ induced by the generalized APR tilting module $T$ in general is neither \textit{separating} nor \textit{splitting} (see \cite{ASS} for definitions). Indeed, By Theorem 5.6 in Page 230 \cite{ASS}, it is splitting if and only if the injective dimension $\id_A M \leqslant 1$ for every $M \in \mathcal{F}$. If $\mathcal{A} (z, z) = eAe$ is not isomorphic to $k$, then the simple module $S$ (which is in $\mathcal{F}$) has infinite injective dimension.

We end this section with two examples.

\begin{example}
Let $\mathcal{A}$ be pictured below with relations $\beta \rho = \delta \beta$, $\delta^2 = \rho^2 = \delta \alpha = 0$.
\begin{equation*}
\xymatrix {x \ar[r] ^{\alpha} & z \ar@(dl,dr)[]|{\delta} & y \ar@(rd,ru)[]|{\rho} \ar[l] _{\beta}}.
\end{equation*}
Left projective modules $P_x, P_y, P_z$ are pictured as below:
\begin{equation*}
\xymatrix {x \ar[d]^{\alpha} \\ z} \qquad
\xymatrix{ & y \ar[dl] ^{\rho} \ar[dr] ^{\beta} & \\ y \ar[dr] ^{\beta} & & z \ar[dl] ^{\delta} \\ & z & } \qquad
\xymatrix {z \ar[d]^{\delta} \\ z}
\end{equation*}
Right projective modules $Q_x, Q_y, Q_z$ are pictured as below:
\begin{equation*}
x \qquad
\xymatrix {y \ar[d]^{\rho} \\ y} \qquad
\xymatrix{ & z \ar[dl] ^{\delta} \ar[d] ^{\beta} \ar[dr] ^{\alpha} & \\ z \ar[dr] ^{\beta} & y \ar[d] ^{\rho} & x \\ & y & }
\end{equation*}
By the above theorem, $P_x \oplus P_y \oplus \tau ^{-1} P_z$ is a generalized APR tilting module.

Let $A$ be the associated algebra of $\mathcal{A}$. As a right $A$-module, $DP_z$ has the following projective presentation:
\begin{equation*}
\xymatrix{0 \ar[r] & Q_x \oplus Q_y \ar[r] & Q_z \ar[r] & DP_z \ar[r] & 0}.
\end{equation*}
Applying the functor $\Hom _{A^{\text{o}}} (-, A^{\text{o}})$, we get a projective resolution of $\tau ^{-1} P_z$:
\begin{equation*}
\xymatrix{0 \ar[r] & P_z \ar[r] & P_x \oplus P_y \ar[r] & \tau^{-1} P_z \ar[r] & 0}.
\end{equation*}
Thus $\pd_A \tau^{-1} P_z = 1$. It is not hard to see that $\tau^{-1} P_z$ has the following structure:
\begin{equation*}
\xymatrix{ & y \ar@{-}[dl] \ar@{-}[dr] & & x \ar@{-}[dl] \\ y & & z & }
\end{equation*}
Clearly, $\Hom_A (\tau ^{-1} P_z, P_z) = 0$.
\end{example}

If $P_S$ is simple, then the almost split sequence starting at $P_S$ is precisely the projective resolution of $\tau^{-1} P_S$ (see \cite{ASS}). This is not true for generalized APR tilting modules, as shown by the following example:

\begin{example}
Let $A$ be the path algebra of the following quiver with relations $\delta \alpha = \alpha \rho$ and $\delta^2 = \rho^2 = 0$.
\begin{equation*}
\xymatrix {x \ar@(ld,lu)[]|{\rho} \ar[r] ^{\alpha} & y \ar@(rd,ru)[]|{\delta}}.
\end{equation*}
Then $P_x \oplus \tau ^{-1} P_y$ is a generalized APR tilting module. By computation, $\tau^{-1} P_y$ coincides with the injective module $I_x$. But the almost split sequence ending at $\tau^{-1} P_y$ is:
\begin{equation*}
\xymatrix{0 \ar[r] & P_y \ar[r] & M \ar[r] & \tau^{-1} P_y \cong I_x \ar[r] & 0},
\end{equation*}
where $M$ has the following structure:
\begin{equation*}
\xymatrix{ & x \ar@{-}[dl] \ar@{-}[dr] & & y \ar@{-}[dl] \\ x & & y & }.
\end{equation*}
Therefore, the almost split sequence is not the projective resolution of $\tau^{-1} P_x$:
\begin{equation*}
\xymatrix{0 \ar[r] & P_y \ar[r] & P_x \ar[r] & \tau^{-1} P_y \cong I_x \ar[r] & 0}.
\end{equation*}
\end{example}

\end{document}